\newtheorem{theorem}{Theorem}[section]
\newtheorem{proposition}[theorem]{Proposition}
\newtheorem{lemma}[theorem]{Lemma}
\theoremstyle{definition}
\newtheorem{definition}[theorem]{Definition}
\theoremstyle{remark}
\newtheorem{remark}[theorem]{Remark}
\newcommand{\Image}[1]{{\mathsf{Im}({#1})}}
\newcommand{\trace}[1]{{\mathsf{tr}       \left({#1}\right)}}
\newcommand{\tr}   [2]{{\mathsf{tr}_{{#1}}\left({#2}\right)}}
\newcommand{\DE} [2]{{\mathsf{D}_{#1}^{{#2}}}}
\newcommand{\EA} [0]{{\mathsf{EA}}}
\newcommand{\CCZ}[0]{{\mathsf{CCZ}}}
\newcommand{\DD}[0]{\mathbb{D}}
\newcommand{\DDx}[0]{{\mathbb{D}^\times}}
\newcommand{\FF}[0]{\mathbb{F}}
\newcommand{\LL}[0]{\mathbb{L}}
\newcommand{\LLx}[0]{{\mathbb{L}^\times}}
\newcommand{\f}[1]{\mathbb{F}_{#1}}
\newcommand{\ProjectiveLine}[0]{{\mathbb{P}^1}}
\newcommand{\PLL}[0]{{\mathbb{P}^1(\LL)}}
\newcommand{\Mobius}[0]{{\mathfrak{M}}}
\newcommand{\LLinear}[0]{{\mathfrak{L}}}
\newcommand{\qbar}[0]{{\overline{q}}}
\newcommand{\Hbar}[0]{{\overline{\mathcal{H}}}}
\newcommand{\HH}[0]{{\mathcal{H}}}
\newcommand{\Fset}[0]{\mathcal{F}_{q,\LL}}
\newcommand{\Vset}[0]{\mathcal{V}_{q,\LL}}
\newcommand{\Sset}[0]{\mathcal{S}_{q,\LL}}
\newcommand{\lms}{\{\!\!\{}
\newcommand{\rms}{\}\!\!\}}
\DeclareMathOperator{\wt}{wt}
\DeclareMathOperator{\Gcd}{gcd}
\DeclareMathOperator{\Ker}{Ker}
\DeclareMathOperator{\GL}{GL}
\DeclareMathOperator{\PGL}{PGL}
\DeclareMathOperator{\PSL}{PSL}
\DeclareMathOperator{\mult}{mult}
\DeclareMathOperator{\algdeg}{deg_{alg}}
\newcommand{\Leq}[0] {{\, {\sim_{\mathfrak{L}}}    \,}}
\newcommand{\Peq}[0] {{\, {\sim_{\mathfrak{M}}}    \,}}
\newcommand{\GLEQ}[0]{{\, {\approx_{\mathfrak{L}}} \,}}
\begin{document}

\title{Classification of $(q,q)$-biprojective APN functions}%
\author{Faruk G\"{o}lo\u{g}lu}
\address{MFF, Charles University, Prague.}
\curraddr{}
\email{faruk.gologlu@mff.cuni.cz}
\thanks{MFF, Charles University, Prague. ({\tt faruk.gologlu@mff.cuni.cz})\\
This work was supported by the {\sf GA\v{C}R Grant 18-19087S - 301-13/201843}.}

\date{}

\dedicatory{}

\begin{abstract}
In this paper, we classify $(q,q)$-biprojective almost perfect nonlinear (APN) 
functions over $\LL \times \LL$ under the natural left and right action of 
$\GL(2,\LL)$ where $\LL$ is a finite field of characteristic $2$. 
This shows in particular that the only quadratic APN functions (up to $\CCZ$-equivalence)
over $\LL \times \LL$ that satisfy the so-called subfield property 
are the Gold functions and the function $\kappa : \f{64} \to \f{64}$ 
which is the only known APN function that is equivalent to a permutation over 
$\LL \times \LL$ up to $\CCZ$-equivalence. The $\kappa$-function was introduced in
(Browning, Dillon, McQuistan, and Wolfe, 2010). Deciding whether there exist other 
quadratic APN functions (possibly $\CCZ$-equivalent to permutations) that satisfy
subfield property or equivalently, generalizing $\kappa$ to higher dimensions was an
open problem listed for instance in (Carlet, 2015) as one of the interesting open 
problems on cryptographic functions.
\end{abstract}

\maketitle

\section{Introduction}

Almost perfect nonlinear (APN) functions are cryptographically important
functions over a vector space over the finite field of order two that provides 
the best resistance against differential cryptanalysis.
Arguably the most important open problem on APN functions is the
question on the existence of APN permutations over even dimensional vector spaces.
There are no APN permutations over $\f{2}^2$ and $\f{2}^4$ \cite{apnfour}. 
Existence of an APN permutation over $\f{2}^6$ was shown in \cite{jfd10}.
The function is $\CCZ$-equivalent (see Sections \ref{sec_2} and \ref{sec_3} 
for the definitions of the concepts that are used in Introduction) to
the quadratic function $\kappa : \f{2^6} \to \f{2^6}$. Note that we will view the
finite field $\f{2^{2l}}$ as a vector space $\f{2}^{2l}$ and also as 
$\f{2^l}\times \f{2^l}$. When $\kappa$ is viewed as a polynomial over the 
finite field $\f{2^6}$ it falls into the class of Dembowski-Ostrom polynomials
that satisfy the subfield property introduced in \cite{jfd10}. These 
polynomial functions when viewed as functions over $\f{2^l} \times \f{2^l}$
form the class of $(q,q)$-biprojective functions. Of course a natural 
question that arises is to determine whether Dembowski-Ostrom polynomials
that satisfy the subfield property (or, equivalently, the class of $(q,q)$-biprojective 
functions) contain APN functions that are $\CCZ$-equivalent to permutations for $l > 3$.
The question is viewed as an important problem. For instance it was included 
by Carlet in a list of interesting research questions regarding crytographic functions
\cite[Section 3.7]{Carlet15} as an important subproblem of the major problem of deciding 
whether there exists an APN permutation when $l > 3$. 
The problem also appeared in \cite[Problem 15]{G15}. 
In this paper, we solve this problem. 

\begin{theorem}\label{thm_main}
Let $q = 2^k$, $r = 2^l$, $\LL = \f{2^l}$ with $0 < k < l$ and 
$F : \LL \times \LL \to \LL \times \LL$
be a $(q,q)$-biprojective function. Then $F$ is APN if and only if $\Gcd(k,l) = 1$, and
\begin{enumerate}
\item $l$ is even and $F \GLEQ G_{q+1}$ or $F \GLEQ G_{q+r}$, or
\item $l$ is odd, $k$ is odd, and  $F \GLEQ G_{q+1}$, or 
\item $l$ is odd, $k$ is even, and $F \GLEQ G_{q+r}$, or
\item $l = 3$ and $F \GLEQ \kappa$.
\end{enumerate}
\end{theorem}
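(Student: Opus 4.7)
The plan is to translate the APN condition into an explicit system of two $\f{2}$-linear equations in $(x,y)$ with coefficients in $\LL$, and then to apply the double-sided $\GL(2,\LL)$-action to reduce to a short list of normal forms that can be checked directly. Write
$$F(x,y) = \bigl(f_1(x,y),\,f_2(x,y)\bigr),\qquad f_j(x,y) = a_j x^{q+1} + b_j x^q y + c_j x y^q + d_j y^{q+1},$$
so that $F$ is specified by eight scalars in $\LL$. Being Dembowski--Ostrom, $F$ is APN if and only if for every nonzero $(a,b) \in \LL^2$ the symmetric bilinear form
$B_F((x,y),(a,b)) = F(x{+}a,y{+}b) + F(x,y) + F(a,b)$
has kernel exactly $\{(0,0),(a,b)\}$ as a function of $(x,y)$. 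Computing the four monomial derivatives yields, for $j = 1, 2$,
$$(a_j a + b_j b)x^q + (c_j a + d_j b)y^q + (a_j a^q + c_j b^q)x + (b_j a^q + d_j b^q)y = 0,$$
and the APN problem becomes the question of when this pair of $\f{2}$-linear equations admits exactly two solutions.

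A first reduction extracts the necessity of $\Gcd(k,l) = 1$: substituting $(x,y) = (\lambda a, \lambda b)$ into the two equations collapses each to $(\lambda^q + \lambda)\,f_j(a,b) = 0$, so every $\lambda \in \LL$ fixed by $x \mapsto x^q$ yields a trivial kernel element $(\lambda a, \lambda b)$. These $\lambda$ form the subfield $\f{2^{\Gcd(k,l)}}$ of $\LL$, forcing $\Gcd(k,l) = 1$, and under that constraint the trivial solutions collapse to $(0,0)$ and $(a,b)$. The same substitution also shows that $F(a,b) \neq 0$ for every nonzero $(a,b)$ is necessary, since otherwise the whole $\LL$-line through $(a,b)$ would lie in the kernel.

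To exploit the equivalence, observe that the right action of $B \in \GL(2,\LL)$ corresponds, under the dehomogenization $t = y/x$, to a M\"obius transformation on $\PLL$, while the left action recombines $(f_1,f_2)$ inside the four-dimensional $\LL$-space of biprojective forms spanned by $x^{q+1}, x^q y, x y^q, y^{q+1}$. Viewing each $f_j$ as a divisor of degree $q+1$ on $\PLL$, the pair $(f_1,f_2)$ determines a pencil of such divisors, and the two actions move this pencil through its orbit. I would reduce in stages: first use the right action to place distinguished zeros (e.g.\ a common zero, or one zero of $f_1$ together with one of $f_2$) at $0$ and $\infty$ on $\PLL$; then use the left action to choose a convenient basis of the resulting pencil. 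This produces a finite list of normal forms each controlled by at most a couple of free scalars, and on each of them the APN condition from the first paragraph becomes a concrete polynomial identity. In the generic normal forms the identity reduces to a classical Gold/linearized-polynomial criterion, recovering $G_{q+1}$ or $G_{q+r}$ together with the parity conditions on $k$ and $l$ in items (i)--(iii) that determine which Gold exponent is admissible over $\LL$.

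The main obstacle is the residual non-Gold normal form: one must show it produces an APN function only when $l = 3$, and in that case exactly the orbit of $\kappa$. For this family the APN condition rewrites as a fixed algebraic equation over $\LL$ whose solvability is controlled by Frobenius compatibilities between $\LL$ and $\f{q}$; a degree or Galois-theoretic count should show the required coincidence occurs only when $\LL = \f{64}$, and explicit inspection then identifies the surviving representative with $\kappa$. Two subtleties make this step delicate: enumerating the normal forms without missing a sporadic family (which requires a careful analysis of the pencil structure under the M\"obius action), and pinning down precisely why the sporadic coincidence is unique to dimension $6$, which amounts to showing that a certain resultant, depending on $q$ and the remaining free scalars, vanishes over $\LL$ only for $\LL = \f{64}$.
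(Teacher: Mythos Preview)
Your framework is correct and matches the paper: the derivative computation, the extraction of $\Gcd(k,l)=1$, and the reduction via the two-sided $\GL(2,\LL)$-action are exactly how the paper proceeds. But the proposal stops being a proof precisely where the real content begins.

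First, your normal-form enumeration is too vague. The paper does not merely ``place zeros at $0$ and $\infty$''; it classifies $q$-projective polynomials by the number $|Z_f|\in\{0,1,2,p^\delta+1\}$ of $\PLL$-zeros, proves the $\PGL(2,\LL)$-action is transitive on each stratum $\Pi_j$ (Lemmas~\ref{lem_13}, \ref{lem_0123}), and obtains an explicit representative set $\Sset$ (Lemma~\ref{lem_sset}). Without this stratification you have no guarantee your list of normal forms is complete, which you yourself flag as a worry.

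Second, and more seriously, the two hard steps are not addressed. For the ``generic'' forms you say the APN condition ``reduces to a classical Gold/linearized-polynomial criterion'': it does not. The paper shows (Propositions~\ref{prop_even}, \ref{prop_odd}) that APN forces $rf+sg\in\Pi_1$ (resp.\ $\Pi_0\cup\Pi_3$) for all $(r,s)$, which translates into the map $x\mapsto f(x,1)/g(x,1)$ being a fractional projective \emph{permutation} of $\PLL$; the identification with $G_{q+1}$ or $G_{q+r}$ then invokes the recent classification of such permutations from~\cite{FG22FFA} (Lemma~\ref{lem_fracproj}). This is a substantial external input, not a routine linearized-polynomial argument.

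For the sporadic case you propose a ``degree or Galois-theoretic count'' or a resultant argument. The paper's mechanism is entirely different and genuinely combinatorial: after reducing to $f=(0,0,1,0)_q$, the APN condition becomes $dI_3\cap(I_1\cup I_2\cup I_3)=\emptyset$, where $I_j=\{b:\,x^{q+1}+x+b\textrm{ has }j\textrm{ roots}\}$. The key Lemma~\ref{lem_multiset} shows this intersection is nonempty for $l>3$ by a counting argument that hinges on the Dillon--Dobbertin theorem that $I_1$ is a difference set with Singer parameters. Nothing in your outline suggests this structure, and a bare resultant computation would not obviously isolate $l=3$.
\qed
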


We have the following clarifications for the statement of the theorem.
\begin{itemize}

\item  The equivalence relation $\GLEQ$ is introduced by 
the action of $\GL(2,\LL) \times \GL(2,\LL)$ which is a finer notion of 
equivalence than that of $\CCZ$-equivalence. Thus our results imply 
$\CCZ$-equivalence results.

\item  The biprojective maps $G_s : \LL \times \LL \to \LL \times \LL$ 
with $s \in \{q+1,q+r\}$ are 
the so-called Gold maps $X \mapsto X^s$ in the univariate notation
with a suitable identification of the vector spaces. 

\item The Gold maps $G_{2^i(2^j+1)}$ over $\LL \times \LL$ are APN whenever 
$\Gcd(j,2l) = 1$ by \cite{gold}. APN Gold maps over $\LL \times \LL$ are not 
$\CCZ$-equivalent to permutations by \cite{GL20}. 
Thus, a $(q,q)$-biprojective APN function $F$ over $\LL \times \LL$ is 
$\CCZ$-equivalent to a permutation if and only if $l = 3$ and $F \GLEQ \kappa$.

\item The special case $k = 1$ was solved in \cite{CL21} using
results from \cite{LLHQ,Dasa,GKL}. Thus, our theorem generalizes the main results
of \cite{CL21,LLHQ,Dasa,GKL}.

\item Another idea to attack the problem is to identify a class of functions
that includes $\kappa$ when $l = 3$ and also are $\CCZ$-equivalent 
to permutations for larger $l$. This is the case of the so-called butterfly 
construction \cite{PUB} which requires $l$ to be odd. In \cite{CPT,CDP}, butterflies
were shown not to be APN when $l > 3$. We will show in Remark \ref{rem_butterfly} that 
a subcase of our Proposition \ref{prop_s} (which is a subcase of the main theorem) 
strictly generalizes the butterfly construction. Thus, our theorem 
also generalizes the main results of \cite{CPT,CDP}.

\item The proof is based on three concepts:
\begin{itemize}
\item zeroes of projective polynomials \cite{Bluher},
\item properties of Dillon-Dobbertin difference sets \cite{DD}, and
\item recent classification of fractional projective permutations over finite fields \cite{FG22FFA}.
\end{itemize}

\item The proof avoids the use of Weil bound and is purely combinatorial.
\end{itemize}

The natural actions of the groups
$\GL(2,\LL) \times \GL(2,\LL)$ 
(left and right application of non-singular $\LL$-linear transformations)
and
($\LLx \times \LLx) \times \GL(2,\LL)$ 
(scaling on both components and right application of non-singular $\LL$-linear transformations) 
on bivariate vectorial Boolean functions of type
\[
F : \LL \times \LL \to \LL \times \LL
\]
and the action of $\LLx \times \GL(2,\LL)$ on bivariate vectorial Boolean functions of type
\[
f : \LL \times \LL \to \LL 
\]
are important for this paper. The set of $(q,q')$-biprojective functions 
is fixed (setwise) by the action $(\LLx \times \LLx) \times \GL(2,\LL)$ 
(actually, $(q,q')$-biprojectivity is defined in such a way to accommodate this property). 
This and other niceties introduced by $\PGL(2,\LL)$ on $q$-projective functions
allows one to prove the rather straightforward fact \cite[Lemma 3.6]{FG22IEEE} that  
whether these functions are APN can be checked in a simple way using 
parametrization from $\ProjectiveLine(\LL)$ instead of $\LL \times \LL$
which also hints why we have many such families. 
A method to find new biprojective APN families
was given as well in  \cite[Lemma 4.2]{FG22IEEE} along with two biprojective APN families. 
These group actions and biprojectivity were instrumental in giving a method 
(in joint works with Lukas K\"olsch) to check 
equivalences between biprojective APN functions \cite[Theorem 3]{GK21} and 
isotopisms between biprojective semifields \cite[Theorem 5.10]{GK22}, which 
allowed us to solve the equivalence problem for all known biprojective APN 
families and, in the semifields case, generalizing a result of Albert 
\cite{Albert} to give a solution to a 60-year old problem of Hughes 
\cite{Hughes} on determining the autotopism group of Knuth semifields 
\cite{GKPP}. 
The first ever family of commutative semifields of 
odd order that contains an exponential number of non-isotopic semifields 
\cite[Corollary 6.4]{GK22}, and a family containing an exponential number of inequivalent
APN functions \cite[Theorem 5]{GK21} were given recently using this method. 
In the case of APN functions, Kaspers and Zhou 
were the first \cite{KZ} to prove such a result with a different method. 
They showed that the Taniguchi family \cite{Taniguchi}, which is also 
$(q,q')$-biprojective, contains an exponential number of inequivalent APN 
functions. In the $(q,q)$-biprojective 
case, even the larger action $\GL(2,\LL) \times \GL(2,\LL)$ fixes the set of 
$(q,q)$-biprojective functions (setwise). This fact can be seen as the main reason that
the classification in this paper is possible which we will heavily exploit
(see Section \ref{sec_3} ff.) together with the above mentioned properties introduced
by these classical groups on $(q,q)$-biprojective functions.
Note that these are natural and well-known group actions which were used
previously on biprojective semifields \cite{bbeven,bbodd}.

Of course, the study of bivariate functions and semifields (not necessarily 
biprojective) has a long history. In the case of semifields, the bivariate idea 
dates back almost a century to Dickson \cite{Dickson}, Hughes and Kleinfeld 
\cite{HK} and Knuth \cite{Knuth}.
For the APN functions, the initial work on bivariate functions was by Carlet 
\cite{CarletBivariate} who found the first biprojective APN family, and then 
Zhou and Pott \cite{ZP} introduced another family of biprojective APN functions
as well as a family of commutative semifields that contains a quadratic number of 
inequivalent members. Carlet, then introduced \cite{CarletSCM} a method to find 
bivariate (but not necessarily biprojective) APN functions from his previous 
biprojective family. Further work on biprojective APN functions includes the 
family of Taniguchi \cite{Taniguchi}, and quite recently the papers 
\cite{LiZhou,CLV} 
which derive both biprojective and non-biprojective APN families from the 
family of \cite{FG22IEEE} (or by extending it). The method of \cite{CarletSCM} 
was further investigated in \cite{CBC}. Further work that do not involve 
constructions of bivariate functions but study their important 
properties include \cite{Anbar,KZ2,KKK,Kaspers}.

In Section \ref{sec_2} we will explain the notions related to vectorial 
Boolean functions, including the definitions of biprojective functions,
Dembowski-Ostrom polynomials and APN functions that we mentioned in Introduction. 
In Section \ref{sec_3} we will explain the various actions we introduced 
above (and more) on vectorial functions and the corresponding equivalence 
relations including $\approx_{\CCZ}$ and $\GLEQ$ mentioned above. We also 
determine the equivalence classes of one of these actions which will give 
us a \textit{representative set} of biprojective functions which will 
reduce the problem of classification of all biprojective functions to 
the problem of classification of functions in the representative set.
Section \ref{sec_4} contains results required in the proof of the main 
theorem related to zeroes of projective polynomials, Dillon-Dobbertin 
difference sets and the recent classification of fractional projective 
permutations over finite fields. Finally, in Section \ref{sec_5}, we prove 
our main theorem.

\section{Preliminaries}\label{sec_2}

Let $F : \f{2}^n \to \f{2}^n$ be a \textbf{vectorial Boolean function}.
A function $F$ is said to be \textbf{almost perfect nonlinear (APN)}
if 
\[
	F(x) + F(x+a) = b
\]
has zero or two solutions for every
$(a,b) \in \f{2}^n \setminus \{0\} \times \f{2}^n$. Every vectorial
Boolean function $F : \f{2}^n \to \f{2}^n$ can be written as 
an evaluation function of a polynomial over $\f{2^n}[X]$ with 
(polynomial) degree at most $2^n-1$, i.e., $F : X \mapsto F(X)$ where
\[
 F(X) = \sum_{i = 0}^{2^n-1} A_i X^i.
\]
We will not make distinction between functions and their polynomial
representations. The \textbf{algebraic degree} of a vectorial Boolean
function is defined to be
\[
\algdeg(F) = \max \{ \wt_2(i) \ : \ A_i \ne 0 \},
\]
where $\wt_2$ denotes the weight of base-$2$ representation of an integer.
When we say \textit{quadratic}, \textit{affine} or \textit{linear}, we refer to this
notion of degree. We will need the usual \textbf{polynomial degree}
as well which will be denoted by 
\[
\deg(F) = \max \{ i \ : \ A_i \ne 0 \}
\]
as usual. 
The polynomials in $\f{2^n}[X]$ that correspond to \textbf{affine} functions are
\[
M(X) = \sum_{0 \le i \le n-1} C_i X^{2^i} + D.
\]
If $D = 0$, they are called \textbf{linear} functions and are $\f{2}$-linear 
vector-space endomorphisms of $\f{2^n}$ when viewed as a $\f{2}$-vector space.
As polynomials in $\f{2^n}[X]$, they are known as \textbf{linearized polynomials}.
We are particularly interested in the class of quadratic polynomials in $\f{2^n}[X]$,
\[
 Q(X) = \sum_{0 \le i \ne j \le n-1} B_{ij} X^{2^i+2^j} +
       \sum_{0 \le i \le n-1} C_i X^{2^i} +
			 D.
\]
The subclass of the above class of polynomials that contain only the 
non-affine parts is known as \textbf{Dembowski-Ostrom (DO)} polynomials, i.e.,
\[
D(X) = \sum_{0 \le i \ne j \le n-1} B_{ij} X^{2^i+2^j}.
\]
When $n = 2l$, a further subclass of DO polynomials is important. 
The class of polynomials 
\begin{equation}\label{eq_do}
R(X) = A X^{q+1} + B X^{2^l(q+1)} + C X^{q + 2^l} + D X^{2^lq+1},
\end{equation}
where $q = 2^k$ with $0 < k < l$, satisfying the \textbf{subfield property},
\[
	R(aX) = a^{q+1}R(X), \quad \textrm{ for all } a \in \f{2^l},
\]
contains (up to $\CCZ$-equivalence) the only known APN function 
$\kappa$ that is $\CCZ$-equivalent to a
permutation for even $n$ (we will describe various notions of 
equivalences of vectorial Boolean functions further below). Let $\LL = \f{2^l}$.
Identifying $\f{2^{2l}} = \LL(\xi) = \LL\xi + \LL \cong \LL \times \LL$,
we can write the above class of functions 
$
X \mapsto R(X)
$
as
$
	(x,y) \mapsto R(x,y)
$
with
\begin{align*}
	R(x,y) &= ((a_0 x^{q+1} + b_0 x^qy + c_0 xy^q + d_0 y^{q+1}),
	          (a_1 x^{q+1} + b_1 x^qy + c_1 xy^q + d_1 y^{q+1})) \\
				 &= (f(x,y),g(x,y))
\end{align*}
where $X = (x\xi + y)$. This motivates the following definition.

\begin{definition}
Let $q = 2^k$ with $0 < k < l$.
\begin{itemize}
\item
Let $f \in \LL[x,y]$ be a polynomial of the form
\[
	f(x,y) = a_0 x^{q+1} + b_0 x^qy + c_0 xy^q + d_0 y^{q+1}.
\]
Then $f$ is called a \textbf{$q$-biprojective polynomial}.

\item
Let $R : \LL \times \LL \to \LL \times \LL$ be a function of the form
\[
R : (x,y) \mapsto R(x,y) = (f(x,y),g(x,y))
\]
where $f$ and $g$ are $q$-biprojective polynomials. Then $R$ is called a
\textbf{$(q,q)$-biprojective function}.
\end{itemize}
\end{definition}

We will use the shorthand notation 
\[
f =  (a_0,b_0,c_0,d_0)_q,
\]
and
\[
R = ((a_0,b_0,c_0,d_0)_q,(a_1,b_1,c_1,d_1)_q) = (f,g).
\]

The subfield property can be recognized in this form. For a $(q,q)$-biprojective map $F = (f,g)$, one has 
\[
	(f(cx,cy),g(cx,cy)) = c^{q+1}(f(x),g(y)), \quad \textrm{ for all } c,x,y \in \LL. 
\]

\begin{remark}
One can generalize the concept to 
\textbf{$(q,q')$-biprojective functions}
\[
F : (x,y) \mapsto F(x,y) = ((a_0,b_0,c_0,d_0)_q, (a_1,b_1,c_1,d_1)_{q'}),
\]
where $q' = 2^{k'}$ (here one allows $0 \le k,k' < l$). 
These functions satisfy a modified form of the subfield property
\[
	(f(cx,cy),g(cx,cy)) = (c^{q+1}f(x),c^{q'+1}g(y)), \quad \textrm{ for all } c,x,y \in \LL. 
\]
In this paper, we are only interested in $(q,q)$-biprojective functions. See \cite{FG22IEEE,GK21} 
for more on $(q,q')$-biprojective APN functions and their constructions, equivalences and enumerations.

\end{remark}

Define
\begin{align*}
 \DE{f}{0}(x,y) &= b_0 x^q + c_0 x + d_0 y^q + d_0 y,\\
 \DE{f}{\infty}(x,y) &= a_0 x^q + a_0 x + c_0 y^q + b_0 y,\\
 \DE{f}{u}(x,y) &= (a_0 u + b_0) x^q + (a_0 u^q + c_0) x 
	 			         + (c_0 u + d_0) y^q + (b_0 u^q + d_0) y, 
\end{align*}
for $u \in \LLx$.
The following lemma was proved in \cite{FG22IEEE}.

\begin{lemma}\label{lem_cond}
Let $F = (f,g)$ be a $(q,q')$-biprojective function. Then $F$ is APN if and only if 
$
\DE{f}{u}(x,y) = 0 = \DE{g}{u}(x,y)
$
has exactly two solutions for each $u \in \ProjectiveLine(\LL)$.
\end{lemma}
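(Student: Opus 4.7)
The plan is to reduce the APN property of the quadratic bivariate function $F = (f,g)$ to a kernel condition on the linearised derivatives, and then to identify those linearised derivatives with the polynomials $\DE{f}{u}$ and $\DE{g}{u}$ via a parametrisation of directions by $\ProjectiveLine(\LL)$. Since both components of $F$ are Dembowski-Ostrom polynomials (so $f$ is homogeneous of degree $q+1$ and $g$ of degree $q'+1$), the derivative $D_a F(x,y) := F((x,y)+a) + F(x,y)$ in any nonzero direction $a = (\alpha,\beta) \in \LL\times\LL$ is componentwise an affine-linear function of $(x,y)$; write it as $L^f_a(x,y) + f(\alpha,\beta)$ and $L^g_a(x,y) + g(\alpha,\beta)$ with $L^f_a, L^g_a$ linear. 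A direct check shows $L^f_a(0,0) = 0 = L^f_a(\alpha,\beta)$, and similarly for $g$, so $\{(0,0),\,(\alpha,\beta)\} \subseteq \Ker L^f_a \cap \Ker L^g_a$; the APN property for this quadratic $F$ is therefore equivalent to $|\Ker L^f_a \cap \Ker L^g_a| = 2$ for every nonzero $a$.

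First I would expand $f(x+\alpha, y+\beta) + f(x,y)$ and extract the part linear in $(x,y)$. This yields
\[
L^f_a(x,y) = (a_0\alpha + b_0\beta)\,x^q + (a_0\alpha^q + c_0\beta^q)\,x + (c_0\alpha + d_0\beta)\,y^q + (b_0\alpha^q + d_0\beta^q)\,y,
\]
and the formula for $L^g_a$ is the same with $q$ replaced by $q'$ and the coefficients replaced by $(a_1,b_1,c_1,d_1)$. Comparing with the definitions preceding the lemma, the direction $a = (1,0)$ reproduces $\DE{f}{\infty}$, whereas $a = (u,1)$ for $u \in \LL$ reproduces $\DE{f}{u}$ (the case $u=0$ giving $\DE{f}{0}$). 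Thus, as $a$ ranges over any complete set of representatives of the projective classes in $(\LL\times\LL)\setminus\{(0,0)\}$, the linear form $L^f_a$ ranges over $\{\DE{f}{u} : u \in \ProjectiveLine(\LL)\}$, and similarly for $g$.

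The second ingredient is projective invariance of the joint kernel size. The homogeneity of $f$ of degree $q+1$ gives the scaling identity
\[
L^f_{ca}(cx,cy) = c^{q+1}\,L^f_a(x,y)
\]
for every $c \in \LLx$, and analogously with $q+1$ replaced by $q'+1$ for $g$. In particular, the map $(x,y) \mapsto (cx,cy)$ is an $\f{2}$-linear bijection from $\Ker L^f_a$ onto $\Ker L^f_{ca}$, and similarly for $g$, so the size of $\Ker L^f_a \cap \Ker L^g_a$ depends only on the projective class $(\alpha:\beta) \in \ProjectiveLine(\LL)$ of $a$.

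Combining the steps, $F$ is APN iff $|\Ker L^f_a \cap \Ker L^g_a| = 2$ for every nonzero $a$, which, by the above, is equivalent to the system $\DE{f}{u}(x,y) = 0 = \DE{g}{u}(x,y)$ having exactly two solutions for every $u \in \ProjectiveLine(\LL)$. The argument is essentially a direct computation; the only points requiring care are the bookkeeping of the edge cases $u \in \{0,\infty\}$, so that the correspondence between directions and projective classes is consistent, and the verification that $(\alpha,\beta)$ itself lies in $\Ker L^f_a \cap \Ker L^g_a$, which is what allows one to recast APN as a pure kernel-size statement rather than a statement about images of derivatives.
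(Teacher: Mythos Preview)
Your argument is correct: the derivative of a Dembowski--Ostrom map in direction $a$ is affine with linear part $L_a$, and APN is equivalent to $|\Ker L^f_a \cap \Ker L^g_a| = 2$ for every nonzero $a$; your expansion of $L^f_a$ matches $\DE{f}{u}$ under the parametrisation $a = (u,1)$ (resp.\ $a=(1,0)$ for $u=\infty$), and the homogeneity identity $L^f_{ca}(cx,cy) = c^{q+1}L^f_a(x,y)$ (and its $q'$-analogue for $g$) shows the joint kernel size depends only on the class of $a$ in $\ProjectiveLine(\LL)$.

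As for comparison: the paper does not give its own proof of this lemma but simply cites \cite{FG22IEEE}. What you have written is precisely the direct computation behind that citation, so there is no methodological divergence to discuss.
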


\subsection{The trace map and Hilbert's Theorem 90}
Let $\LL$ be the finite field with $p^l$ elements, $q = p^k$ for $k > 0$
and let $\DD \subset \LL$ of order $p^\delta$ with $\delta = \Gcd(k,l)$. 
The \textbf{trace map} is defined as
\[
\tr{\LL/\DD}{x} = \sum_{j = 0}^{l/\delta-1} x^{(p^\delta)^j}.
\]
When $\DD = \f{p}$ then we simply write
\[
\trace{x} = \tr{\LL/\f{p}}{x}. 
\]

The following is the finite fields version of Hilbert's Theorem 90.

\begin{lemma}[Hilbert's Theorem 90]\label{lem_hilbert}
Let $\Gcd(j,l) = 1$ and $a \in \LL$. Then 
$\tr{\LL/\DD}{a} = 0$ if and only if $a = x^{(p^\delta)^j} - x$ for some $x \in \LL$.
\end{lemma}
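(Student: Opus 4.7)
The plan is to show the two inclusions between $\Image(L)$ and $\Ker(T)$, where $L(x) = x^{(p^\delta)^j} - x$ and $T(x) = \tr{\LL/\DD}{x}$, using a dimension count to force equality.

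First I would note that $\gal(\LL/\DD)$ has order $l/\delta$ and is generated by the Frobenius $\sigma : x \mapsto x^{p^\delta}$. Since $\Gcd(j,l) = 1$ implies $\Gcd(j, l/\delta) = 1$, the automorphism $\sigma^j$ also generates $\gal(\LL/\DD)$, so its fixed subfield is exactly $\DD$. This gives the easy direction: for any $x \in \LL$,
\[
T(x^{(p^\delta)^j} - x) = T(\sigma^j(x)) - T(x) = 0,
\]
because the trace $T = \sum_{i=0}^{l/\delta-1} \sigma^i$ is invariant under any element of the Galois group. Hence $\Image(L) \subseteq \Ker(T)$.

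Next I would compute the dimensions of both $\f{p^\delta}$-vector spaces. The kernel of $L$ is the set of $x \in \LL$ with $\sigma^j(x) = x$, i.e., the fixed field of $\langle \sigma^j \rangle = \gal(\LL/\DD)$, which is $\DD$; hence $|\Ker(L)| = p^\delta$ and by rank–nullity $|\Image(L)| = p^{l-\delta}$. For the trace side, $T : \LL \to \DD$ is nonzero (since distinct characters are linearly independent, or directly since $T$ is a nonzero polynomial of degree $(p^\delta)^{l/\delta-1} < p^l$), so $T$ is surjective onto $\DD$ and $|\Ker(T)| = p^l / p^\delta = p^{l-\delta}$.

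Combining, $\Image(L)$ is a subset of $\Ker(T)$ of the same cardinality $p^{l-\delta}$, so $\Image(L) = \Ker(T)$, which is precisely the claim. The only subtlety is confirming that $\sigma^j$ really generates the Galois group under the stated hypothesis and that $T$ is surjective; both are standard and nothing beyond this dimension-count is required.
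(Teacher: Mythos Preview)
Your argument is correct and is the standard dimension-counting proof of the additive Hilbert~90 over finite fields: the inclusion $\Image(L)\subseteq\Ker(T)$ is immediate from Galois-invariance of the trace, and the equality of cardinalities $p^{l-\delta}$ forces $\Image(L)=\Ker(T)$. The only point worth making explicit is that $\delta\mid l$ ensures $l/\delta\mid l$, so $\Gcd(j,l)=1$ indeed gives $\Gcd(j,l/\delta)=1$ and hence $\langle\sigma^j\rangle=\gal(\LL/\DD)$; you noted this, and it is right.

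There is nothing to compare against in the paper: Lemma~\ref{lem_hilbert} is stated there without proof as a classical fact, so your write-up simply supplies what the paper omits.
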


The $\DD$-linear vector-space endomorphisms of $\LL$ can be written as
\[
L(x) = \sum_{j = 0}^{l/\delta-1} a_j x^{(p^\delta)^j}, \quad a_j \in \LL,
\]
and are called \textbf{$\DD$-linearized polynomials}. Determining kernels of such 
endomorphisms in $\LL$, especially of the form $L(x) = ax^q - bx$ and the 
zeroes of its translates $L(x)+c$, is important for this paper. This can 
simply be done by observing
\[
ax^q - bx = 0
\]
for some nonzero $r \in \LL$ if and only if $r^{q-1} = b/a$. In that case
$L(rx)/rb = x^q - x$.

Then one can deduce that the zeroes of $L(x)$ are $0$ and $\epsilon r$ for 
$\epsilon \in \DDx$ if such $r$ exists. Then the case $L(x)+c$ can be handled 
using Hilbert's Theorem 90. The following lemma is relevant and will be needed.

\begin{lemma}\label{lem_gcd}
For a prime $p$,
\begin{enumerate}
\item $\Gcd(p^k-1,p^l-1) = p^{\Gcd(k,l)}-1$.
\item 
\[
\Gcd(p^k+1,p^l-1) = \left\{\begin{array}{ll}
1 & \textrm{if } \frac{l}{\Gcd(k,l)} \textrm{ is odd, and } p = 2,\\
2 & \textrm{if } \frac{l}{\Gcd(k,l)} \textrm{ is odd, and } p \textrm{ is odd},\\
p^{\Gcd(k,l)}+1  & \textrm{if } \frac{l}{\Gcd(k,l)} \textrm{ is even}.
\end{array}\right.
\]
\end{enumerate}
\end{lemma}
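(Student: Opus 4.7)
The plan is to prove (i) by a one-step Euclidean reduction on the exponents, and then derive (ii) from (i) using the factorisation $p^{2k}-1 = (p^k-1)(p^k+1)$.

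For (i) I would assume $k \ge l$ without loss of generality and use the identity
\[
p^k - 1 = p^{k-l}(p^l - 1) + (p^{k-l}-1),
\]
which yields $\Gcd(p^k-1,p^l-1) = \Gcd(p^{k-l}-1,p^l-1)$. This is exactly one step of the Euclidean algorithm applied to the exponent pair $(k,l)$, so induction on $k+l$ finishes (i) with value $p^{\Gcd(k,l)}-1$.

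For (ii) I would set $d = \Gcd(k,l)$, write $k=dt$ and $l=ds$ with $\Gcd(s,t)=1$, and let $e = \Gcd(p^k+1,p^l-1)$. In the odd-$s$ case I would first note that $\Gcd(2t,s)=1$, hence $\Gcd(2k,l)=d$. Modulo $e$, the congruences $p^k\equiv -1$ and $p^l\equiv 1$ give $p^{2k}\equiv 1$, so the multiplicative order of $p$ mod $e$ divides $d$; this forces $p^k=(p^d)^t\equiv 1 \pmod e$, and combined with $p^k\equiv -1 \pmod e$ we obtain $e\mid 2$. Splitting on the parity of $p$ recovers the claimed values: for $p=2$ both $p^k+1$ and $p^l-1$ are odd so $e=1$, and for odd $p$ both are even so $e=2$.

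In the even-$s$ case I would write $s=2m$ (so $t$ is odd since $\Gcd(s,t)=1$) and first show $p^d+1 \mid e$: odd $t$ gives $p^d+1 \mid (p^d)^t+1 = p^k+1$ via the factorisation $a+1\mid a^t+1$, while $2d\mid l$ combined with $p^d+1\mid p^{2d}-1$ gives $p^d+1 \mid p^l-1$. For the reverse divisibility I would invoke part (i): from $p^k+1 \mid p^{2k}-1$,
\[
e \mid \Gcd(p^{2k}-1,p^l-1) = p^{\Gcd(2k,l)}-1 = p^{2d}-1 = (p^d-1)(p^d+1),
\]
so $e=(p^d+1)j$ with $j\mid p^d-1$. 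The step I expect to be the most delicate is pinning $j=1$; I would do this by expanding $p^k+1 = (p^d+1)\,S$ with $S=\sum_{i=0}^{t-1}(-1)^i p^{d(t-1-i)}$, checking $S\equiv 1 \pmod{p^d-1}$ using that $t$ is odd, and concluding from $j\mid S$ together with $j\mid p^d-1$ that $j=1$ and hence $e=p^d+1$.
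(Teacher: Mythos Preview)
Your proof is correct. The paper states this lemma without proof, treating it as a standard well-known result, so there is no approach to compare against; your argument via one-step Euclidean reduction for (i) and the factorisation $p^{2k}-1=(p^k-1)(p^k+1)$ together with the alternating-sum trick for (ii) is a clean and complete justification.
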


\subsection{Difference sets with Singer parameters and multisets}

Let $M$ be a multiset whose elements are $s_i$ with repetition $d_i$ for $1 \le i \le r$, denoted by
\[
M = \lms s_1^{[d_1]}, \ldots, s_r^{[d_r]} \rms,
\]
where we write $\mult_M(s_i) = d_i$. 
When all $s_i$ have the same repetition number $d$, we write $M = S^{[d]}$,
where $S = \{ s_1, \ldots, s_r \}$.

For two multisets $S,T$ we denote by
\[
S/T = \lms s/t \ : \ s \in S, t \in T \rms
\]
the \textit{direct division} of two multisets.

A subset $D$ of cardinality $k$ of a group $G$ (written multiplicatively) 
of order $v$ is said to be a \textbf{$(v,k,\lambda)$-difference set} if
\[
D/D = \{1\}^{[k]} \cup (G \setminus \{1\})^{[\lambda]}.
\]

We are mostly interested in the cyclic difference sets in the multiplicative
group $\LLx$ in characteristic $2$. The parameters 
$(2^l-1,2^{l-1},2^{l-2})$ and
$(2^l-1,2^{l-1}-1,2^{l-2}-1)$ are called \textbf{Singer parameters}, since they
are the parameters of the Singer sets (i.e., hyperplanes of $\LL$),
\[
\Hbar = \{ x \in \LL \ : \ \trace{x} = 1 \},
\]
and $\HH \setminus \{0\}$ where
\[
\HH = \{ x \in \LL \ : \ \trace{x} = 0 \},
\]
respectively. Dillon-Dobbertin difference sets are another important example of
cyclic difference sets with Singer parameters \cite{DD} (see Lemmas \ref{lem_dd}, \ref{lem_multiset}) 
which we will use frequently in this paper.

\subsection{Equivalences of vectorial Boolean functions}

Let $F : \FF \to \FF$ be a vectorial Boolean function and $\FF = \f{2^n}$.
The widest known notion of equivalence that keeps the APN property invariant
is called the \textbf{$\CCZ$-equivalence} \cite{CCZ}. Define the \textbf{graph} 
of the function $F$ by
\[
\Gamma_F = \{ (x,F(x)) \ : \ x \in \FF \}.
\]
Then $F$ is said to be $\CCZ$-equivalent to $F'$ if there exists $\f{2}$-linear endomorphisms
$A,B,C,D$ of $\FF$, elements $u,v \in \FF$ and a permutation $\pi : \FF \to \FF$ such that
\begin{equation}\label{eq_CCZ}
\begin{pmatrix}
A & B \\
C & D
\end{pmatrix}
\begin{pmatrix}
x\\
F(x)
\end{pmatrix}
+
\begin{pmatrix}
u\\
v
\end{pmatrix}
=
\begin{pmatrix}
\pi(x)\\
F'(\pi(x))
\end{pmatrix}.
\end{equation}
In that case we write $F \approx_{\CCZ} F'$. A narrower notion of equivalence that keeps
the algebraic degree of $F$ invariant is called the \textbf{extended affine (EA) equivalence}.
We write $F \approx_{\EA} F'$ if 
\[
A_1 \circ F \circ A_2(x) + A_3(x) = F'(x),
\]
for affine maps $A_1,A_2,A_3 : \FF \to \FF$ with $A_1,A_2$ bijective. It can easily be shown 
that $\EA$-equivalence is a special case of $\CCZ$-equivalence where one sets $B = 0$.
An important theorem for quadratic APN functions is that for two quadratic APN functions 
$F,G$ we have by a result of Yoshiara \cite{Yoshiara},
\[
F \approx_{\EA} G \iff F \approx_{\CCZ} G.
\]
We can restrict the equivalence even more if, for instance, we want to keep the property 
of being a permutation invariant. 
The functions $F$ and $F'$ are said to be \textbf{linearly equivalent} if 
\[
L_1 \circ F \circ L_2 = F',
\]
for $L_1,L_2 \in \GL(n,2)$. Note that this is equivalent to setting $B = C = 0$ and 
$u = v = 0$ in \eqref{eq_CCZ} 
(when only $B = C = 0$ holds, they are called \textbf{affinely equivalent}).
We denote this equivalence by $F \approx_{\GL(n,2)} F'$. In the following, we will 
even restrict this equivalence to the case where $L_1,L_2 \in \GL(2,2^l) < \GL(2l,2)$ 
when $n = 2l$ with the obvious motivation that the (left and right) action of 
$\GL(2,2^l)$ keeps the $(q,q)$-biprojective property of $F(x,y) = (f(x,y),g(x,y))$ invariant.


\section{Actions of $\GL(2,\LL)$ and $\PGL(2,\LL)$}\label{sec_3}

In this section we outline the basics of several actions of $\GL(2,\LL)$ and $\PGL(2,\LL)$
on biprojective functions. Most of this section can be considered standard lore. 
Let 
\[
\Vset = \{ (a,b,c,d)_q \ : \ a,b,c,d \in \LL \}, 
\]
be the set of all $q$-biprojective polynomials. 
Let $f, g \in \Vset$ be two $q$-biprojective polynomials and 
\begin{align*}
F : \LL \times \LL &\to \LL \times \LL\\
            (x,y)  &\mapsto (f(x,y),g(x,y))
\end{align*}
be the associated $(q,q)$-biprojective function.
Define $\Fset$ to be the set of all $(q,q)$-biprojective functions, i.e.,
\[
\Fset = \Vset \times \Vset.
\]
Let $\LLinear(\LL)$ be the group of all nonsingular $\LL$-linear transformations of $\LL\times \LL$, i.e.,
\[
\GL(2,\LL) \cong \LLinear(\LL) = \left\{ (x,y) \mapsto (tx+uy,vx+wy) \ : \ t,u,v,w \in \LL \ | \ tw - uv \ne 0 \right\}.
\]
We are mainly interested in the standard action of $\GL(2,\LL) \times \GL(2,\LL)$ on
$(q,q)$-biprojective functions $F \in \Fset$. That is
\[
F'(x,y) = L_1 \circ F \circ L_2(x,y).
\]
This action defines an equivalence relation which we denote by
$F' \GLEQ F.$
Define also the action of the group $\LLx \times \GL(2,\LL)$ on $q$-biprojective polynomials $f \in \Vset$ 
where $\LLx < \GL(2,\LL)$ acts on $f$ by \textbf{scaling}, and the action of $\GL(2,\LL)$ is the usual
right action, i.e.,
\begin{align*}
f'(x,y) &= \alpha ( f \circ L(x,y) ),\\
        &= \alpha \left( a (tx+uy)^{q+1} + b (tx+uy)^q (vx+wy) + c(tx+uy) (vx+wy)^q + d (vx+wy)^{q+1} \right),
\end{align*}
where $(\alpha,L) \in \LLx \times \GL(2,\LL)$. In this case we say that $f \Leq f'$.

Set $\phi_f(x) = f(x,1)$. A polynomial $\phi_f \in \LL[x]$ for $f \in \Vset$ is
called a \textbf{$q$-projective polynomial}.
The projective version of the above action on the bivariate $q$-projective polynomial $f$, 
on the univariate $q$-projective polynomial $\phi_f$ can be given using the 
\textbf{fractional linear (M\"obius) transformations} over the finite field $\LL$, i.e.,
\[
\PGL(2,\LL) \cong \Mobius(\LL) = \left\{ x \mapsto \frac{tx+u}{vx+w} \ : \ t,u,v,w \in \LL \ | \ tw - uv \ne 0 \right\}.
\]
Now, define the action
\begin{equation}\label{eq_action1}
\phi_{f'}(x) = \alpha (vx+w)^{q+1} (\phi_f \circ \mu(x)),
\end{equation}
for $(\alpha,\mu) \in \LLx \times \PGL(2,\LL)$ and 
$
\mu : x \mapsto
\frac{tx+u}{vx+w}.
$ 
One addresses the zero of the denomiator $(vx+w)$ by introducing $\infty = \beta/0$ 
for all $\beta \in \LLx$. We define $\ProjectiveLine(\LL) = \LL \cup \{\infty\}$. 
By defining  $\mu(\infty)=t/v$, we see that all $\mu \in \Mobius(\LL)$ permutes 
$\ProjectiveLine(\LL)$. Note that we view the action as
\begin{equation}\label{eq_action2}
\phi_{f'}(x) = \alpha \left( a (tx+u)^{q+1} + b (tx+u)^q (vx+w) + c(tx+u) (vx+w)^q + d (vx+w)^{q+1} \right),
\end{equation}
so that $\phi_{f'}$ is a ($q$-projective) polynomial over $\LL[x]$ and we do not have to deal with $\infty$ (but we will do that later, since it is helpful). We write $\phi_f \Peq \phi_{f'}$. The following lemma is straightforward.

\begin{lemma}\label{lem_unibi}
We have 
$
	f \Leq f' \textrm{ if and only if } \phi_f \Peq \phi_{f'}.
$
\end{lemma}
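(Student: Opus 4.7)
The plan is to show that the two actions — the bivariate action of $\LLx \times \GL(2,\LL)$ on $\Vset$ and the univariate projective action of $\LLx \times \PGL(2,\LL)$ on the associated $\phi$-polynomials — are intertwined by the dehomogenization map $f(x,y) \mapsto \phi_f(x) := f(x,1)$, and that this map is injective on the relevant coefficient space. Once both facts are established the equivalence of the two relations $\Leq$ and $\Peq$ is immediate.

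For the forward direction, I start with $f'(x,y) = \alpha\, f(tx+uy,\, vx+wy)$ for some $(\alpha,L)\in\LLx\times\GL(2,\LL)$ with $tw-uv\neq 0$, and simply set $y=1$. Writing $f = (a,b,c,d)_q$, this yields
\[
\phi_{f'}(x) = \alpha\bigl(a(tx+u)^{q+1} + b(tx+u)^q(vx+w) + c(tx+u)(vx+w)^q + d(vx+w)^{q+1}\bigr),
\]
which is precisely formula \eqref{eq_action2} for the action of $(\alpha,\mu)$ with $\mu(x)=(tx+u)/(vx+w)\in\Mobius(\LL)$; the nondegeneracy condition $tw-uv\neq 0$ is the same on both sides, so $\phi_f \Peq \phi_{f'}$.

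For the converse, assume $\phi_{f'}$ arises from $\phi_f$ via \eqref{eq_action2} with parameters $(\alpha,t,u,v,w)$ and $tw-uv\neq 0$. I lift $\mu$ to a matrix representative $L(x,y)=(tx+uy,\, vx+wy)\in\LLinear(\LL)$ and set $f'' := \alpha\,(f\circ L)$; by the closure of $\Vset$ under the bivariate action, $f''\in\Vset$. Applying the forward direction to $f''$ gives $\phi_{f''} = \alpha(vx+w)^{q+1}(\phi_f\circ\mu) = \phi_{f'}$ as polynomials in $\LL[x]$. The (harmless) subtlety here is that $\mu$ has several matrix lifts in $\GL(2,\LL)$, but any two of them differ by a nonzero scalar which is absorbed into $\alpha$, so any choice works.

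It remains to observe that $f \mapsto \phi_f$ is injective on $\Vset$: for $f=(a,b,c,d)_q$ we have $\phi_f(x)=ax^{q+1}+bx^q+cx+d$, and the monomial degrees $q+1,\,q,\,1,\,0$ are pairwise distinct (since $q\ge 2$), so the quadruple $(a,b,c,d)$ is uniquely recovered from $\phi_f$. Hence $\phi_{f''}=\phi_{f'}$ forces $f''=f'$, giving $f\Leq f'$. No step is a genuine obstacle; the only thing to watch is the projective ambiguity in Step 3, which is absorbed into the scalar $\alpha$ as noted.
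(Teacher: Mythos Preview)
Your proof is correct and is exactly the natural argument the paper has in mind; note that the paper gives no proof at all, simply declaring the lemma ``straightforward'' immediately before stating it. Your write-up is a clean spelling-out of that straightforward argument (dehomogenize for one direction, lift a matrix representative and use injectivity of $f\mapsto\phi_f$ for the other), with the only nontrivial remark being the absorption of the projective scalar ambiguity into $\alpha$, which you handle correctly.
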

We will not use $\phi_f$ to refer to univariate $q$-projective version of $f$ and
instead use $f$ for both univariate and bivariate functions and polynomials. 
We will also use $\Vset$ as the ambient space of both types of functions/polynomials.

Our aim is to classify the APN functions in $\Fset$ under the equivalence $\GLEQ$. 
We say that $\Sset \subseteq \Vset$ is a {\bf representative set} of $\Vset$
if (denoting by $[f]_\sim$ the equivalence class of $f$ under $\sim$)
\[
[\Sset]_\Peq = \bigcup_{f \in \Sset} [f]_\Peq = \Vset.
\]

In the following we will find a representative set $\Sset$ using univariate 
$q$-projective polynomials and their equivalence $\Peq$. Then it will be clear that checking
\[
(f,g) \in \Sset \times \Vset
\]
is enough for our classification under $\GLEQ$ since $f \Peq f'$ if and only if $f \Leq f'$
and if $F = (f,g) \in \Vset \times \Vset$ then 
$F \GLEQ F' = (f',g') \in \Sset \times \Vset$ for some $f',g'$ using the 
$\LL$-linear transformation implied by the equivalence $f \Leq f'$.

\subsection{Roots of projective polynomials}

Let $q = p^k$, $\DD = \f{p^\delta}$, $\LL = \f{p^l}$ where $\delta = \Gcd(k,l)$ and
\[
f(x) = ax^{q+1} + bx^q + cx + d
\]
be a nonzero $q$-projective polynomial. 
If $a = 0$, then $f$ is an affine polynomial and the set of zeroes of $f$ in $\LL$, i.e.,
\[
Z'_f = \{ x \in \LL \ : \ f(x) = 0\}
\]
satisfies $|Z'_f| \in \{ 0, 1, p^\delta$\}. To see that, first 
observe that 
\textbf{scaling}, i.e., $f \mapsto \alpha f$ for $\alpha \in \LLx$,
the \textbf{translations} $f(x) \mapsto f(x + \beta)$ for $\beta \in \LL$ and 
the \textbf{dilations}    $f(x) \mapsto f(\gamma x)$ for $\gamma \in \LLx$ 
keep the number of zeroes of $f$ in $\LL$ invariant. Then we have only 
a few options to consider:
\begin{itemize}
\item $f = 1$ 

has no $\LL$-zeroes ---degenerate case (together with omitted $f = 0$).
\item $f \in \{x,x^q\}$

has one $\LL$-zero.
\item $f = x^q-cx-d$ where $c \ne 0$. 

We have 
\begin{itemize}
\item if $c = A^{q-1} \in (\LLx)^{q-1}$, then $f$ has 
\begin{itemize}
\item $p^\delta$ $\LL$-zeroes if $\tr{\LL/\DD}{d/A^q} = 0$, and 
\item no $\LL$-zeroes if $\tr{\LL/\DD}{d/A^q} \ne 0$; or
\end{itemize}
\item one $\LL$-zero if $c \not\in (\LLx)^{q-1}$,
\end{itemize}
by Hilbert's Theorem 90.
\end{itemize}
Now assume $a \ne 0$. We will show that
$|Z'_f| \in \{ 0, 1, 2, p^\delta + 1\}.$
Assume $f$ has at least one $\LL$-zero $r \in Z'_f$. Now consider
\[
f'(x) = f(x+r) = ax^{q+1}+b'x^q+c'x.
\]
The \textbf{reciprocal} $f''(x) = x^{q+1}f'(1/x)$ is
\[
f''(x) = a+b'x+c'x^q,
\]
where $\deg f'' < q+1$. Now $f''$ has one fewer $\LL$-zeroes than $f'$ 
(since we \textit{punctured} the zero of $f'$ at $0$) and thus we have
\[
|Z'_{f''}|+1 = |Z'_{f'}| = |Z'_f| \in \{ 0, 1, 2, p^\delta + 1\}.
\]

This observation, together with the fact that $\PGL(2,\LL)$ is generated by translations, dilations 
and inversions (for which the corresponding action is reciprocation), 
motivates us to ascribe $f(\infty) = 0$ if and only if $\deg f < q+1$ so that $\Peq$ 
preserves the number of roots in $\ProjectiveLine(\LL)$.
Now we can define 
\begin{definition}
The \textbf{$\PLL$-zeroes} of a $q$-projective polynomial $f$ is defined as
\[
Z_f = \{ x \in \ProjectiveLine(\LL) \ : \ f(x) = 0\},
\]
where we define $f(\infty) = 0$ if and only if $\deg f < q+1$.
\end{definition}

Thus we have
\begin{lemma}\label{lem_zeroes}
Let $f \ne 0$ be a $q$-projective polynomial. Then, 
\begin{enumerate}
\item $|Z_f|$ is invariant under $\Peq$, and 
\item $|Z_f| \in \{ 0, 1, 2, p^\delta + 1\}$.
\end{enumerate}
\end{lemma}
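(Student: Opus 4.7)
The plan is to handle (i) and (ii) separately, in each case following the blueprint laid out in the discussion preceding the lemma.

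For part (i), I would use that $\PGL(2,\LL)$ is generated by translations, dilations, and the inversion $x \mapsto 1/x$, together with scaling $f \mapsto \alpha f$ by $\alpha \in \LLx$, so invariance of $|Z_f|$ under $\Peq$ reduces to invariance under each generator separately. Scaling, translations, and dilations preserve $\deg f$ (and hence the condition $\infty \in Z_f$) and permute the affine $\LL$-zeros. The only non-trivial case is the inversion, for which the induced action sends $f$ to its reciprocal $f''(x) = x^{q+1} f(1/x)$; the nonzero affine zeros biject via $x \leftrightarrow 1/x$, while the roles of $0$ and $\infty$ are swapped, since $0 \in Z_{f''}$ iff the constant coefficient of $f''$ vanishes iff the coefficient of $x^{q+1}$ in $f$ vanishes iff $\infty \in Z_f$, and symmetrically for the other direction.

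For part (ii), I would split on whether the leading coefficient $a$ of $f$ is zero. If $a = 0$, then $\deg f \le q$, so $\infty \in Z_f$; the sub-cases enumerated in the bulleted list (constant, monomial $x$ or $x^q$, and $x^q - cx - d$ with $c \ne 0$) together with Hilbert's Theorem 90 yield $|Z'_f| \in \{0,1,p^\delta\}$, whence $|Z_f| \in \{1,2,p^\delta+1\}$. If $a \ne 0$, then $\infty \notin Z_f$ and $|Z_f| = |Z'_f|$; if $Z'_f = \emptyset$ we are done with $|Z_f| = 0$, otherwise pick any $r \in Z'_f$ and replace $f$ by $f'(x) := f(x+r)$, which still has leading coefficient $a$ but now satisfies $f'(0) = 0$, so $f'(x) = ax^{q+1} + b'x^q + c'x$. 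The reciprocal $f''(x) = x^{q+1} f'(1/x) = a + b'x + c'x^q$ falls into the $a = 0$ case, giving $|Z'_{f''}| \in \{0,1,p^\delta\}$; since $f''(0) = a \ne 0$, the nonzero affine zeros of $f''$ biject with the nonzero affine zeros of $f'$ via $x \leftrightarrow 1/x$, so $|Z'_{f'}| = |Z'_{f''}| + 1 \in \{1,2,p^\delta+1\}$, and translation-invariance of $|Z'|$ gives the same count for $f$.

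The only delicate points are the bookkeeping of $\infty$ in (i) — cleanly handled by the convention $\infty \in Z_f \iff \deg f < q+1$ — and the verification in the $a = 0$ case that the Hilbert 90 argument produces exactly $p^\delta$ zeros (rather than some other divisor of $q-1$): this is because $\Ker(ax^q - bx)$ in $\LL$ is either trivial or of the form $\{0\} \cup r\DDx$ for a suitable nonzero $r$ with $r^{q-1} = b/a$, so that translates of $ax^q - bx$ have either $0$ or $p^\delta$ zeros according to the relevant trace condition.
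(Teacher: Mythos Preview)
Your proof is correct. For Part (ii) you follow exactly the case analysis the paper lays out in the discussion preceding the lemma, so there is nothing to add. For Part (i) your argument differs slightly from the paper's: you decompose $\PGL(2,\LL)$ into its standard generators (translations, dilations, inversion) and verify invariance under each, whereas the paper handles an arbitrary $\mu : x \mapsto (tx+u)/(vx+w)$ in one shot by showing directly from \eqref{eq_action1} and \eqref{eq_action2} that $\mu(Z_{f'}) = Z_f$. The paper in fact remarks that your generator approach also works (``which, actually, also follows from the previous discussion''), so the two are interchangeable; the paper's version has the minor advantage of yielding the explicit bijection $\mu$ between zero sets without needing to know a generating set, while yours is perhaps more transparent at the single nontrivial step (reciprocation swapping $0$ and $\infty$).
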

\begin{proof}
Part (ii) is explained before the lemma. We will give a more detailed 
proof of Part (i) which, actually, also follows from the previous discussion.

Let $f' \Peq f$ via $(\alpha,\mu) \in \LLx \times \PGL(2,\LL)$ with
$\mu : x \mapsto \frac{tx+u}{vx+w}.$ Suppose $v \ne 0$.
Define $T = \LL \setminus \{ w/v \}$,
which satisfies $\mu(T) = \LL \setminus \{t/v\}$. 
We see by \eqref{eq_action1} that 
\[
x \in T \textrm{ is a zero of $f'$ } \iff \mu(x) \textrm{ is a zero of } f.
\]
By \eqref{eq_action2}, we get $a = 0$ if and only if
\[
w/v = \mu^{-1}(\infty) \textrm{ is a zero of $f'$ } \iff  \infty \textrm{ is a zero of } f
\]
if and only if $\deg f < q+1$. Also by \eqref{eq_action2}, since the coefficient of 
the $x^{q+1}$ term of $f'$ is $a t^{q+1} + b t^qv + ctv^q + d v^{q+1} = v^{q+1}f(t/v)$,
we have
\[
t/v = \mu(\infty)      \textrm{ is a zero of $f$ } \iff \infty \textrm{ is a zero of } f'
\]
if and only if $\deg f' < q+1$.  Combining the three cases we get $\mu(Z_{f'}) = Z_f$ and $|Z_f| = |Z_{f'}|$.
These facts also trivially follow when $v = 0$.
\end{proof}

Define the sets (suppressing $q$ and $\LL$ from the notation for simplicity),
\begin{align*}
D_0 &= \left\{ (0,0,0,0)_q \right\}, \\
D_1 &= \left[ (0,0,0,1)_q \right]_\Peq, \\
D   &= D_0 \cup D_1,\\
\Pi_j &= \left\{ f \in \Vset \setminus D \ : \ |Z_f| = j \right\}, 
\end{align*}
for $j \in \{ 0, 1, 2, p^\delta + 1\}$. 

\begin{lemma}\label{lem_13}
We have
\begin{enumerate}
\item $\Vset = \bigcup_{j \in \{0, 1, 2, p^\delta + 1\}} \Pi_j \cup D$ ,
\item $\Pi_1 = [ (0,1,-1,-u)_q ]_\Peq$ where $u \in \LL$ satisfies $\tr{\LL/\DD}{u}=1$,
\item $\Pi_{{p^\delta+1}} = [ (0,1,-1,0)_q ]_\Peq$, 
\end{enumerate}
\end{lemma}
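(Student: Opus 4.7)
The plan is to argue part (i) as an immediate consequence of Lemma \ref{lem_zeroes}(ii), observing that a nonzero constant polynomial has $|Z_f|=1$ (via the $\infty$-zero forced by $\deg f = 0 < q+1$) and lies in $D_1$, while any other nonzero $f$ with $|Z_f|=j$ lies in $\Pi_j$ by definition; adding $D_0 = \{0\}$ yields the claimed partition of $\Vset$.

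For parts (ii) and (iii), the strategy is a uniform normalization via the $\LLx \times \PGL(2,\LL)$-action. Given $f \in \Pi_j$ for $j \in \{1, p^\delta+1\}$, first apply a M\"obius transformation carrying some $\PLL$-zero of $f$ to $\infty$. The resulting representative has vanishing leading coefficient, hence takes the form $f'(x) = bx^q + cx + d$. A short case analysis shows $b, c \ne 0$: the subcase $b = c = 0$ would put $f$ in $D_1$, contradicting $f \notin D$, while if exactly one of $b, c$ vanishes, bijectivity of Frobenius (respectively of the identity) produces a single $\LL$-zero, giving $|Z_f| = 2$, incompatible with both $j=1$ and $j = p^\delta + 1$. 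Scaling by $-1/c$ then brings $f'$ into the form $a x^q - x - c'$ with $a \ne 0$.

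Analyzing the $\DD$-linearized map $L(x) = ax^q - x$ as in the paragraph preceding Lemma \ref{lem_gcd}, Lemma \ref{lem_gcd}(i) shows that the equation $x^{q-1} = 1/a$ has $p^\delta - 1$ solutions if $a \in (\LLx)^{q-1}$ and none otherwise; thus each fiber of $L$ over $\LL$ has size $0$, $1$, or $p^\delta$, with $1$ occurring exactly when $a \notin (\LLx)^{q-1}$. Since $\infty$ is always a $\PLL$-zero of $f'$, the hypothesis $j \in \{1, p^\delta+1\}$ excludes the fiber size $1$ and forces $a \in (\LLx)^{q-1}$. Choosing $\gamma \in \LLx$ with $\gamma^{q-1} = 1/a$ and composing the dilation $x \mapsto \gamma x$ with the scaling by $1/\gamma$ then reduces $f$ to the form $x^q - x - u$ for some $u \in \LL$.

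For part (ii), $|Z_f| = 1$ forces $\tr{\LL/\DD}{u} \ne 0$ by Hilbert's Theorem 90; dilation by $\eta \in \DDx$ (which fixes $x^q - x$ since $\eta^q = \eta$) followed by scaling by $\eta^{-1}$ sends $u \mapsto u/\eta$, and since $\eta \in \DD$ one has $\tr{\LL/\DD}{u/\eta} = \tr{\LL/\DD}{u}/\eta$, so choosing $\eta = \tr{\LL/\DD}{u} \in \DDx$ normalizes the trace to $1$. For part (iii), $|Z_f| = p^\delta + 1$ forces $\tr{\LL/\DD}{u} = 0$; by Hilbert's Theorem 90, $u = t^q - t$ for some $t \in \LL$, and the translation $x \mapsto x+t$ eliminates the constant term, yielding $(0,1,-1,0)_q$. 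The main obstacle is bookkeeping: tracking how each generator of $\LLx \times \PGL(2,\LL)$ modifies the coefficients through the successive reductions and verifying $\Peq$-invariance of $\Pi_j$ at each step (which follows from Lemma \ref{lem_zeroes}(i)). The heart of the argument is repeated application of Hilbert's Theorem 90 to the map $L(x) = ax^q - x$.
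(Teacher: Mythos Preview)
Your approach is essentially the same as the paper's: reduce to a representative with $\infty$ as a zero, argue that the resulting affine polynomial $bx^q+cx+d$ has $b,c\ne 0$, normalize via a dilation to $x^q-x-u$, and then invoke Hilbert's Theorem~90. Your case analysis for $b=0$ or $c=0$ is in fact more explicit than the paper's.

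There is, however, a small gap in part~(ii). You stop after normalizing to $x^q-x-v$ with $\tr{\LL/\DD}{v}=1$, but this only shows that every $f\in\Pi_1$ is $\Peq$-equivalent to \emph{some} such representative. To conclude that $\Pi_1$ is a single $\Peq$-class you must still show that any two polynomials $x^q-x-v$ and $x^q-x-v'$ with $\tr{\LL/\DD}{v}=\tr{\LL/\DD}{v'}=1$ are equivalent. This is immediate: $\tr{\LL/\DD}{v-v'}=0$, so by Hilbert's Theorem~90 there is $z\in\LL$ with $z^q-z=v-v'$, and the translation $x\mapsto x+z$ carries one to the other. The paper includes exactly this step (``the transitivity of the action follows again by translations''). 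Your argument for part~(iii) is complete as written, since you do perform the final translation there.

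A minor remark on part~(i): your phrasing suggests $D_1$ consists only of nonzero constants, but in fact $D_1=[(0,0,0,1)_q]_{\Peq}$ also contains polynomials such as $(1,0,0,0)_q=x^{q+1}$ (apply the reciprocation $x\mapsto 1/x$). This does not affect the validity of~(i), which follows directly from the definitions together with Lemma~\ref{lem_zeroes}(ii), but the sentence ``any other nonzero $f$ with $|Z_f|=j$ lies in $\Pi_j$'' is not quite right as stated.
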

\begin{proof}
The proof of Part (i) follows from the discussion before the lemma. For Parts (ii) and (iii), we have to 
show that the action is transitive on the sets $\Pi_1$ and $\Pi_{p^\delta+1}$. If $a \ne 0$, as in the
discussion before the lemma, using $f'(x) = f(x+r)$ first, where $r$ is an $\LL$-zero of $f$, 
and then using the reciprocal $f''(x) = x^{q+1}f'(1/x)$, one shows that $f \Peq f''$ with 
$\deg f'' < q+1$. Now in both $p^\delta+1$ and one $\PLL$-root cases, $f''$ has the form
\[
f''(x) = a+b'x+c'x^q,
\]
with $-b'/c' = A^{q-1}$ for some $A \in \LLx$. One can apply $x \mapsto xA$ and then scale 
to get $f'' \Peq x^q - x - d$ for some $d \in \LL$,
where $f$ has $p^\delta+1$ $\PLL$-roots if and only if $\tr{\LL/\DD}{d} = 0$ and 
$f$ has one $\PLL$-root if and only if $\tr{\LL/\DD}{d} \ne 0$ by Hilbert's Theorem 90, and
the action is transitive on $\Pi_{p^\delta+1}$ since any $d$ with $\tr{\LL/\DD}{d} = 0$ can be written
as $z^q - z = d$ for some $z \in \LL$ and $f \Peq x^q - x$ after the application of $x \mapsto x+z$. 

For the case $\tr{\LL/\DD}{d} \ne 0$,
applying $x \mapsto \epsilon x$ where $\epsilon \in \DDx$, and scaling by $1/\epsilon$, one can 
assume $\tr{\LL/\DD}{d} = 1$. Then the transitivity of the action follows again by translations.
\end{proof}

When $\Gcd(p^k-1,p^l-1) = 1$, that is to say $\Gcd(k.l)=1$ and $p = 2$ by 
Lemma \ref{lem_gcd}, we can say more.

\begin{lemma}\label{lem_0123}
Let $p = 2, q = 2^k$, and $\delta = \Gcd(k,l) = 1$, then we have
\begin{enumerate}
\item $\Pi_0 = [ g ]_\Peq$, where $g \in \Pi_0$,
\item $\Pi_2 = [ (0,0,1,0)_q ]_\Peq$. 
\item If $l$ is odd, then
\begin{enumerate}
\item $\Pi_1 = [ (1,0,0,1)_q ]_\Peq$.
\end{enumerate}
\item If $l$ is even, then
\begin{enumerate}
\item $\Pi_0 = [ (1,0,0,u)_q ]_\Peq$, where $u \in \LLx \setminus (\LLx)^{q+1}$,
\item $\Pi_3 = [ (1,0,0,1)_q ]_\Peq$.
\end{enumerate}
\end{enumerate}
\end{lemma}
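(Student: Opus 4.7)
The overall strategy is, for each case, first to verify membership of the listed polynomial in the claimed $\Pi_j$ by computing $|Z_f|$, and then to establish transitivity of $\Peq$ on $\Pi_j$ using the generators of $\PGL(2,\LL)$ (translations, dilations, inversions) together with $\LLx$-scalings. For membership: $x^{q+1}+1$ has $\Gcd(q+1,2^l-1)$ zeroes in $\LLx$ and no zero at $\infty$, so by Lemma \ref{lem_gcd} one has $|Z_f|=1$ when $l$ is odd and $|Z_f|=3$ when $l$ is even; $x^{q+1}+u$ with $u\notin(\LLx)^{q+1}$ has no $\LL$-zero and $a\ne 0$, so $|Z_f|=0$; and $f=x$ has $Z_f=\{0,\infty\}$, giving $|Z_f|=2$.

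For Parts (iii)(a) and (iv)(b), transitivity of $\Peq$ on $\Pi_1$ and on $\Pi_{p^\delta+1}=\Pi_3$ is already supplied by Lemma \ref{lem_13}(ii),(iii); combined with the membership check above this identifies the stated orbit representatives. For Part (ii), I would take $f\in\Pi_2$ and apply a M\"obius transformation sending its two $\PLL$-zeroes to $0$ and $\infty$; this forces $f\Peq bx^q+cx$. If both $b,c$ were nonzero, then $bx^{q-1}+c=0$ would have a unique solution in $\LLx$ (since $\Gcd(q-1,2^l-1)=1$ when $\delta=1$), producing a third $\PLL$-zero and contradicting $f\in\Pi_2$. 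So exactly one of $b,c$ vanishes, and an inversion plus scaling brings $f$ to the form $x=(0,0,1,0)_q$.

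For Parts (i) and (iv)(a), which together form the main obstacle, take $f\in\Pi_0$; since $f$ has no $\PLL$-zero, $a\ne 0$ and one can scale to $a=1$, then translate $x\mapsto x+b$ to kill the $x^q$-term (exploiting characteristic two), yielding $f\Peq x^{q+1}+Bx+C$. The crux is to eliminate the linear coefficient $B$: using Bluher's classification \cite{Bluher} of the root counts of $x^{q+1}+Bx+C$, I would argue that the no-$\PLL$-zero stratum forms a single $\Peq$-orbit when $l$ is odd, and splits into exactly the two orbits indexed by the nontrivial cosets of $\LLx/(\LLx)^{q+1}\cong\mathbb{Z}/3\mathbb{Z}$ when $l$ is even. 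These two orbits are then merged by the inversion $x\mapsto1/x$, which sends $x^{q+1}+u$ to $x^{q+1}+u^{-1}$ (up to scaling) and thereby swaps the two nontrivial cosets. The main obstacle is the $B\to 0$ reduction itself: carrying it out requires either iterating the move ``invert, scale by $1/C$, translate to kill the new $x^q$-term'' while tracking how $(B,C)$ evolves, or an orbit-counting argument comparing Bluher's count $N_0$ of no-zero polynomials with the $\LLx\times\PGL(2,\LL)$-orbit size of $(1,0,0,u)_q$, the requisite stabilizer data being supplied by the classification of fractional projective permutations \cite{FG22FFA}.
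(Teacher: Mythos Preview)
Your treatment of Parts (ii), (iii)(a), and (iv)(b) matches the paper's: the membership checks via Lemma~\ref{lem_gcd} are the same, and transitivity on $\Pi_1$ and $\Pi_3$ is taken from Lemma~\ref{lem_13} exactly as the paper does. Your argument for Part (ii) (move the two zeroes to $0$ and $\infty$, then note that $bx^q+cx$ with $bc\ne 0$ would acquire a third $\PLL$-zero since $\Gcd(q-1,2^l-1)=1$) is a clean repackaging of the paper's argument, which instead reduces to $a=0$ by translation and reciprocation and then invokes the pre-Lemma~\ref{lem_zeroes} discussion.

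Where you diverge is Part (i) (and hence (iv)(a)). The paper does \emph{not} prove transitivity on $\Pi_0$ at all: it simply cites \cite[Theorem 2.1]{BTT} for even $l$ and \cite[Lemma 7]{GK21} for general $l$, and then derives (iv)(a) as an immediate corollary of (i) together with the observation that $x^{q+1}+u$ has no $\PLL$-zero when $u\notin(\LLx)^{q+1}$. So your elaborate plan for (i) is attempting something the paper outsources. More to the point, your sketched reduction does not close. The iteration ``invert, scale by $1/C$, translate to kill the $x^q$-term'' applied to $x^{q+1}+Bx+C$ yields $x^{q+1}+(B/C)^q x+1/C$; iterating twice returns $C$ and sends $B$ to $B^{q^2}C^{q-q^2}$, so $B$ never vanishes along this orbit unless it was zero to begin with. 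Your fallback orbit-counting approach via stabilizer data from \cite{FG22FFA} is in principle viable but is a nontrivial computation in its own right, not a gap you can wave away. If you want a self-contained argument, you will need a genuinely different move; otherwise, cite the references as the paper does and deduce (iv)(a) from (i).
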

\begin{proof}
\begin{enumerate}
\item This was proved in
\begin{itemize}
\item \cite[Theorem 2.1]{BTT} when $l$ is even, and
\item \cite[Lemma 7]{GK21} for general $l$.
\end{itemize}
\item Since $\Gcd(2^k-1,2^l-1) = 2^{\Gcd(k,m)}-1 = 1$ by Lemma \ref{lem_gcd}, the only
option (under translations and dilations) when $a=0$ is $f \in \{x,x^q\}$ by the discussion preceding the lemma. 
Note that $x^{q+1}(1/x^q) = x$, hence they are reciprocals of each other. 
If $a = 1$, then using a translation and reciprocation, one gets $f \Peq f''$ where
$\deg f'' < q+1$. Therefore the action is transitive.
\item This is basically Lemma \ref{lem_13} (ii) and the fact that 
$x^{q+1} + u$ has one root for all $u \in \LLx$ by Lemma \ref{lem_gcd}.
\item These follow from Lemma \ref{lem_13} (iii) and Part (i) of this lemma
together with the fact that $x^{q+1}+u$ has either three or no roots
for the given conditions on $u \in \LLx$ by Lemma \ref{lem_gcd}.
\end{enumerate}
\end{proof}

We can now state the representative set $\Sset$ we will use.
\begin{lemma}\label{lem_sset}
Let $p = 2, q = 2^k$, $\delta = \Gcd(k,l) = 1$, and
\[
S = \{ (0,0,0,0)_q, (0,0,0,1)_q, (0,0,1,0)_q \} \cup  
    \{ (1,0,0,a)_q \ : \  a \in \LLx \}.
\]
\begin{enumerate}
\item If $l$ is odd then
\[
\Sset = S \cup \{ (0,1,1,0)_q \} \cup \Pi_0.
\]
\item If $l$ is even then
\[
\Sset = S \cup \Pi_1.
\]
\end{enumerate}
Then $\Sset$ is a representative set for $\Vset$.
\end{lemma}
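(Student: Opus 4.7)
The plan is to enumerate the $\Peq$-classes of $\Vset$ using Lemmas \ref{lem_13} and \ref{lem_0123}, and then verify that each class is met by $\Sset$. With $p = 2$ and $\delta = \Gcd(k,l) = 1$, the identity $p^\delta + 1 = 3$ lets Lemma \ref{lem_13}(i) rewrite the ambient set as
\[
\Vset = D_0 \cup D_1 \cup \Pi_0 \cup \Pi_1 \cup \Pi_2 \cup \Pi_3.
\]
Three pieces can be handled uniformly in $l$: the singleton $D_0 = \{(0,0,0,0)_q\}$ is listed in $S$; the class $D_1 = [(0,0,0,1)_q]_\Peq$ is represented by an element of $S$ by definition; and by Lemma \ref{lem_0123}(ii) the class $\Pi_2 = [(0,0,1,0)_q]_\Peq$ also has its representative in $S$.

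Next I would split on the parity of $l$. For odd $l$, Lemma \ref{lem_0123}(iii)(a) gives $\Pi_1 = [(1,0,0,1)_q]_\Peq$ with $(1,0,0,1)_q \in S$; Lemma \ref{lem_13}(iii) yields $\Pi_3 = [(0,1,-1,0)_q]_\Peq = [(0,1,1,0)_q]_\Peq$ (using $-1 = 1$ in characteristic $2$), and $(0,1,1,0)_q$ is appended to $\Sset$ explicitly; the remaining class $\Pi_0$ is absorbed into $\Sset$ wholesale, so is trivially covered. For even $l$, Lemma \ref{lem_0123}(iv)(b) gives $\Pi_3 = [(1,0,0,1)_q]_\Peq$ with $(1,0,0,1)_q \in S$; Lemma \ref{lem_0123}(iv)(a) gives $\Pi_0 = [(1,0,0,u)_q]_\Peq$ for any $u \in \LLx \setminus (\LLx)^{q+1}$, and by Lemma \ref{lem_gcd}(ii) the subgroup $(\LLx)^{q+1}$ has index $q + 1 > 1$, so such $u$ exists and $(1,0,0,u)_q \in S$; the class $\Pi_1$ is absorbed into $\Sset$ wholesale.

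No fresh analytical content is required beyond the prior lemmas: the argument is a pure bookkeeping check that every $\Peq$-class listed by Lemma \ref{lem_13} or Lemma \ref{lem_0123} is either explicitly named in $S$ or absorbed verbatim into $\Sset$. The only non-routine point is the non-emptiness of $\LLx \setminus (\LLx)^{q+1}$ in the even $l$ case, which is immediate from Lemma \ref{lem_gcd}(ii). Consequently $[\Sset]_\Peq = \Vset$, as required.
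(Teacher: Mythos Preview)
Your proof is correct and is exactly the bookkeeping argument the paper has in mind: the lemma is stated in the paper without proof, as it is meant to follow immediately from Lemmas~\ref{lem_13} and~\ref{lem_0123}, and your case split reproduces that derivation faithfully. One small slip worth fixing: in the even-$l$ case the index of $(\LLx)^{q+1}$ in $\LLx$ is $\Gcd(q+1,2^l-1) = 2^{\Gcd(k,l)}+1 = 3$ by Lemma~\ref{lem_gcd}(ii), not $q+1$; your conclusion that the index exceeds $1$ (hence $\LLx \setminus (\LLx)^{q+1} \ne \emptyset$) is of course still correct.
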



\section{Further lemmas}\label{sec_4}

In this section we will give the results that are needed in the classification.

\subsection{The zeroes of $x^{q+1}+x+b$}

Now we are going to restrict ourselves to a specific type of $q$-projective polynomials, namely
\[
P_b(x) = x^{q+1} + x + b,
\]
for $b \in \LL$. Bluher studied these polynomials \cite{Bluher} and determined 
the cardinalities of the sets 
\begin{align*}
I_j = \{ b \in \LL \ : \ P_b \in \Pi_j \},
\end{align*}
where $j \in \{0,1,2,p^\delta+1\}$. In this section we are interested only in the $p=2$ 
case and $\Gcd(k,l)=1$ where $\LL = \f{2^l}$ and $q = 2^k$. In the definitions of $I_j$
and $P_b$ (along with previously defined sets) we suppress $q$ and $\LL$ for notational simplicity.

The following is an important result on combinatorics of finite fields proved by 
Dillon and Dobbertin \cite{DD}.

\begin{lemma}\label{lem_dd}
Let $q = 2^k$ and $\Gcd(k,l) = 1$.
The set $I_1$ is a
\begin{enumerate} 
\item $(2^l-1,2^{l-1}-1,2^{l-2}-1)$-difference set in $\LLx$ if $l$ is odd, and
\item $(2^l-1,2^{l-1},2^{l-2})$-difference set in $\LLx$ if $l$ is even.
\end{enumerate}
\end{lemma}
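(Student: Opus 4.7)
The plan is to follow a two-step strategy. First I set up the combinatorial framework: for $b \in \LL$, the $\LL$-zeroes of $P_b(x)=x^{q+1}+x+b$ are exactly the fiber $N^{-1}(b)$ of the map $N : \LL \to \LL$, $N(x) = x^{q+1}+x$, and since $\deg P_b = q+1$ the point $\infty$ is never a zero of $P_b$, so $b \in I_j$ if and only if $|N^{-1}(b)|=j$. In characteristic two one has the factorization
\[
N(x)+N(y) \;=\; (x+y)\bigl(g(x,y)+1\bigr), \qquad g(x,y) \;=\; \sum_{i=0}^{q} x^{q-i}y^i,
\]
so for $x \neq y$ the equation $N(x)=N(y)$ reduces to $g(x,y)=1$, which after setting $t=y/x$ becomes $x^q(t^{q+1}+1)/(t+1)=1$. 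This cleanly determines $x$ once $t$ is fixed, because $\Gcd(q-1,2^l-1)=1$ by Lemma \ref{lem_gcd}.

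The first step is to pin down $|I_1|$. Combining the trivial identities $\sum_j |I_j|=2^l$ and $\sum_j j|I_j|=2^l$ with a count of $|I_3|$ obtained from the factorization above and the branches of Lemma \ref{lem_gcd} governing $\Gcd(q+1,2^l-1)$ yields $|I_1|=2^{l-1}-1$ for $l$ odd and $|I_1|=2^{l-1}$ for $l$ even, matching the $k$-parameter of the claimed Singer difference set. The second step is the difference-set property itself. For $a \in \LLx$ with $a \neq 1$, one must show that $|I_1 \cap aI_1|$ equals the constant $\lambda$. I would attack this via the multiplicative characters of $\LLx$: the difference-set property is equivalent to $\bigl|\sum_{b \in I_1} \chi(b)\bigr|^2 = k-\lambda$ for every nontrivial $\chi$. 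Since $|N^{-1}(b)| \in \{0,1,2,3\}$, the indicator $1_{|N^{-1}(b)|=1}$ is a cubic polynomial in $|N^{-1}(b)|$, so the target character sum reduces to the three moment sums
\[
T_r(\chi) \;=\; \sum_{(x_1,\ldots,x_r) \in \LL^r \,:\, N(x_1)=\cdots=N(x_r)} \chi\bigl(N(x_1)\bigr), \qquad r \in \{1,2,3\},
\]
each of which can be evaluated by iterating the factorization of $N(x)+N(y)$ and invoking Lemma \ref{lem_gcd} to enumerate the resulting projective relations.

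The principal obstacle is the cubic moment $T_3(\chi)$, which encodes triple coincidences $N(x_1)=N(x_2)=N(x_3)$ and hence the fine structure of $I_3$; this is also exactly where the parity of $l$ enters, through the dichotomy $\Gcd(q+1,2^l-1) \in \{1,3\}$ in Lemma \ref{lem_gcd}. An attractive alternative, pursued in the original Dillon--Dobbertin argument, is to circumvent $T_3$ altogether by exhibiting an explicit bijection between $I_1$ (after a suitable scaling and translation normalisation) and a Kasami-type hyperplane set in $\LLx$ whose Singer-parameter difference set property is already classical; this is probably the cleanest way to discharge the step.
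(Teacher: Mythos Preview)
Your proposal sketches a genuinely different route than the paper, but it does not close the argument. The paper's proof is a two-line citation: it identifies $I_1$ with the reciprocal set $1/\Delta$ of the Dillon--Dobbertin construction
\[
\Delta \;=\; \{\, x^d + (x+1)^d + 1 : x \in \LL \setminus \f{2} \,\} \quad\text{(or its complement, depending on the parity of }l\text{)},
\qquad d = 4^k - 2^k + 1,
\]
and then invokes \cite[Theorem~A]{DD} for the Singer-parameter difference set property of $\Delta$ and \cite[Theorem~1]{HK1} for the identification $I_1 = 1/\Delta$. No character sums, no moment computations.

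Your Step~1 (the size $|I_1|$) is fine and matches what one gets from Bluher's enumeration; the factorisation $N(x)+N(y)=(x+y)(g(x,y)+1)$ is correct, though note that the bijectivity you need is that of the Frobenius $x\mapsto x^q$, not that of $x\mapsto x^{q-1}$. The real issue is Step~2. You correctly isolate the cubic moment $T_3(\chi)$ as the crux, but you do not evaluate it; instead you defer to ``the original Dillon--Dobbertin argument'' of exhibiting a bijection with a Kasami hyperplane set. That is precisely the paper's route (via the exponent $d=q^2-q+1$), so at the point where your independent approach meets its only nontrivial obstacle you fall back to the cited result. As written, then, the proposal is a plan rather than a proof: the moment computation for $T_3$ is of the same order of difficulty as the theorem itself (it is essentially the Fourier side of the Dillon--Dobbertin statement), and you have not supplied it. If you want a self-contained argument, you must either carry out the $T_3$ evaluation explicitly or give the bijection with the Kasami set; merely naming these options does not discharge the lemma.
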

\begin{proof}
Let $d = 4^k - 2^k +1$ and
\[
\Delta = \left\{
\begin{array}{ll}
               \{ x^d + (x+1)^d + 1 : x \in \LL \setminus \f{2} \} & \textrm{if $l$ is odd, and}\\
\LLx \setminus \{ x^d + (x+1)^d + 1 : x \in \LL \setminus \f{2} \} & \textrm{if $l$ is even}.
\end{array}
\right.
\]
Dillon and Dobbertin showed \cite[Theorem A]{DD} that $\Delta$ is a difference set with indicated
(Singer) parameters.

The fact that $I_1 = 1 / \Delta$ was shown in \cite[Theorem 1]{HK1} (see also \cite[Theorem 5.13]{FG22FFA}). 
\end{proof}

In the next lemma we show that, since $\Gcd(2^k-1.2^l-1) = 2^{\Gcd(k,l)}-1=1$,
we can easily determine the sets $I_1,I_2$ and $I_3$. The map $\rho$ appears quite 
frequently when one works with projective polynomials, for instance, in Serre's
proof that $\PSL(2,q)$ is the Galois group of the equation $x^{q+1} - xy + 1 = 0$
for arbitrary prime power $q$ (see \cite[pp. 131--132]{Abhyankar92}). 
The lemma can be found for general $\Gcd(k,l)$ and $q = 2^k$ in \cite[pp. 175--176]{HK1}. 
We provide a simple proof for $\Gcd(k,l) = 1$ again in characteristic $2$.

\begin{lemma}\label{lem_imageset}
Let $q = 2^k$, $\Gcd(k,l) = 1$ and
\begin{align*}
	\rho : \LL \setminus \f{2} &\to \LL \setminus \f{2},\\
					                x  &\mapsto \frac{x^{q^2+1}}{(x^q+x)^{q+1}}.
\end{align*}
We have
\begin{enumerate}
\item $\lms \rho(x) \ : \ x \in \Hbar \setminus \f{2} \rms = I_1$,
\item $\lms \rho(x) \ : \ x \in \HH   \setminus \f{2} \rms = I_3^{[3]}$,
\item $I_2 = \{0\}$,
\item $\lms x^{q+1} + x \ : \ x \in \LL \rms = I_1 \cup I_2^{[2]} \cup I_3^{[3]}$.
\end{enumerate}
\end{lemma}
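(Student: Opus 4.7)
My plan is to settle parts (iii) and (iv) by direct counting, then for (i) and (ii) to construct a bijection $\phi: \LL\setminus\f{2} \to \LL\setminus\f{2}$ sending $x$ to a root of $P_{\rho(x)}$, and finally to distinguish $I_1$ from $I_3$ via Hilbert's Theorem 90.

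Part (iv) will follow from (iii) combined with Lemma \ref{lem_zeroes}(ii): since the leading coefficient of $P_b$ is $1$, $\infty \notin Z_{P_b}$, so each $b$ appears in $\lms z^{q+1}+z : z \in \LL \rms$ with multiplicity $|Z_{P_b}| \in \{0,1,2,3\}$. For (iii), the factorization $P_0(z) = z(z+1)^q$ gives $0 \in I_2$. Conversely, if $b \in I_2$ with an $\LL$-root $r$, the translation-and-reciprocation step from the discussion before Lemma \ref{lem_zeroes} produces the affine polynomial $f''(z) = (r^q+1)z^q + rz + 1$ with $|Z_{f''}| = 1$. Using $\Gcd(q-1,2^l-1) = 1$ from Lemma \ref{lem_gcd}, whenever $r^q+1 \ne 0$ one rescales $f''$ to the form $w^q+w+c$, which by Hilbert 90 has $0$ or $2$ $\LL$-solutions, never exactly $1$. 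Hence $r^q+1 = 0$, which in characteristic $2$ forces $r = 1$; substituting into $P_b(1) = 0$ then gives $b = 0$, so $I_2 = \{0\}$.

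For the setup of (i) and (ii), I would introduce $\phi(x) = x/(x^q+x) = 1/(x^{q-1}+1)$. Since $z \mapsto z^{q-1}$ is a bijection on $\LLx$ (again by $\Gcd(q-1,2^l-1) = 1$), $\phi$ is a bijection on $\LL \setminus \f{2}$. A short calculation using $(x^q+x)^q = x^{q^2}+x^q$ gives $r^q+1 = x^{q^2}/(x^q+x)^q$ when $r = \phi(x)$, and then $r(r^q+1) = x^{q^2+1}/(x^q+x)^{q+1} = \rho(x)$; so $r$ is an $\LL$-root of $P_{\rho(x)}$. Since every root of $P_b$ with $b \ne 0$ lies in $\LL \setminus \f{2}$, the preimages of $b$ under $\rho$ on $\LL \setminus \f{2}$ are in bijection with the $\LL$-roots of $P_b$. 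In particular each $b \in I_1$ has exactly one $\rho$-preimage and each $b \in I_3$ has exactly three; $\rho$ takes no value in $I_2 = \{0\}$, because $\rho(x) = 0$ would force $x = 0$.

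It remains to decide, given $x$, whether $\rho(x) \in I_1$ or $\rho(x) \in I_3$. Applying translation-and-reciprocation at $r = \phi(x)$ gives once more $f''(z) = (r^q+1)z^q + rz + 1$ with $|Z_{f''}| = |Z_{P_{\rho(x)}}| - 1$. The equation $A^{q-1} = r/(r^q+1)$ has the unique solution $A = (x^q+x)/x^{q+1} \in \LLx$ (this is the key explicit computation); then the substitution $z = wA$ and scaling by $1/(rA)$ bring $f''$ to $w^q + w + c$ with $c = 1/(rA) = x^q$. By Hilbert 90 this has two $\LL$-solutions iff $\trace{x^q} = \trace{x} = 0$, and none otherwise, so $|Z_{P_{\rho(x)}}|$ equals $3$ if $x \in \HH \setminus \f{2}$ and $1$ if $x \in \Hbar \setminus \f{2}$. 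Combining with the bijective correspondence above proves (i) and (ii). The only nontrivial step is the identity $rA = 1/x^q$; everything else is routine bookkeeping.
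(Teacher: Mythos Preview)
Your proof is correct and follows essentially the same approach as the paper: translate-and-reciprocate $P_b$ at a root $r$, then use the bijection $x \leftrightarrow r = x/(x^q+x)$ together with Hilbert's Theorem~90 applied to $1/(rA) = x^q$ to separate $I_1$ from $I_3$. The paper runs the argument backward (starting from $r$ and recovering $h$ with $r = h/(h^q+h)$), whereas you run it forward via $\phi$, but the key identities and the logic are the same.
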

\begin{proof}
Let $b \in I_1 \cup I_2 \cup I_3$ and $f = P_b = x^{q+1}+x+b$. Let $r \in \LL$ be
a root of $f$. Then
\begin{align*}
f(x+r) &= (x+r)^{q+1} + x + r + b\\
       &= x^{q+1} + rx^q + (r+1)^qx + r^{q+1}+r+b \\
       &= x^{q+1} + rx^q + (r+1)^qx\\
	   &\Peq (r+1)^qx^q + rx + 1 = f''(x)
\end{align*}
Then $|Z_f| = 2$ if and only if $|Z_{f''}|=1$ if and only if $r \in \f{2}$. Thus $b = 0$ and Part 
(iii) follows. Otherwise we have $A \in \LLx$ such that 
\[
A^{q-1} = \frac{r}{(r+1)^q},
\]
since $x \mapsto x^{q-1}$ is bijective on $\LL$ by Lemma \ref{lem_gcd}. We have 
$f \in \Pi_1$ if and only if $\trace{1/Ar} = 1$ and 
$f \in \Pi_3$ if and only if $\trace{1/Ar} = 0$ by Hilbert's Theorem 90.
Now, 
let $h \in \LLx$ satisfy
\[
\frac{1}{Ar} = h^q,
\]
That is to say
\[
\frac{1}{A} = h^qr \iff \frac{1}{A^{q-1}} = h^{q(q-1)}r^{q-1} = \frac{(r+1)^q}{r}.
\]
Thus, recalling $r \not\in \f{2}$,
\[
h^{q-1} = \frac{r+1}{r} = 1 + \frac{1}{r}.
\]
Therefore for each $r \in \LL \setminus \f{2}$, there is unique $h \in \LL \setminus \f{2}$,
where $h \in \HH \setminus \f{2}$   if and only if $f \in \Pi_3$
and   $h \in \Hbar \setminus \f{2}$ if and only if $f \in \Pi_1$.
Or, equivalently
\[
r = \frac{h}{h+h^q}.
\]
The remaining  Parts (i), (ii), (iv) follow after observing
\[
b = r^{q+1} + r = \left(\frac{h}{h+h^q}\right)^{q+1} + \frac{h}{h+h^q} 
= \frac{h^{q+1}+h(h^q+h)^q}{(h^q+h)^{q+1}} = \frac{h^{q^2+1}}{(h^q+h)^{q+1}}.
\]
\end{proof}

The following lemma is key to our classification. It will prove 
that the Kim function $\kappa$ exists as a \textit{theorem of small cases},
using the properties of Dillon-Dobbertin difference sets with Singer parameters.

\begin{lemma}\label{lem_multiset}
Let $l > 3$, $q = 2^k$ and $\Gcd(k,l)=1$. For all 
\begin{itemize}
\item $d \in \LLx$ if $l$ is odd, and
\item $d \in \LLx \setminus (\LLx)^{q+1}$ if $l$ is even, we have
\end{itemize}
\[
	d I_3 \cap (I_1 \cup I_2 \cup I_3) \ne \emptyset.
\]
\end{lemma}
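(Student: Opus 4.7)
The plan is to argue by contradiction via a preimage-counting identity. Assume, for some $d \in \LLx$ satisfying the hypotheses of the lemma (in particular $d \notin (\LLx)^{q+1}$ if $l$ is even), that $dI_3 \cap (I_1 \cup I_2 \cup I_3) = \emptyset$. The case $d=1$ is immediate since $dI_3 = I_3$, so assume $d \neq 1$; then, since $0 \notin dI_3$, the assumption is equivalent to $|I_1 \cap dI_3| = |I_3 \cap dI_3| = 0$.

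First I would study the rational map $\gamma(v) := (v^{q+1}+v)/(v+d)$ on $\LL \setminus \{d\}$. Rewriting $\gamma(v) = y$ as $v^{q+1}+(1+y)v+yd = 0$ and using the Frobenius substitution $v = (1+y)^{1/q} w$ (valid for $y \neq 1$), this becomes $w^{q+1}+w = h(y)$ with $h(y) := yd/(1+y)^{(q+1)/q}$. Hence, by Lemma \ref{lem_imageset}(iv), the fibre cardinality $|\gamma^{-1}(y)|$ equals $0$, $1$, or $3$ according to whether $h(y)$ lies in $I_0$, $I_1$, or $I_3$. For the exceptional fibre $y=1$ the equation degenerates to $v^{q+1}=d$, whose number of solutions in $\LL \setminus \{0,1,d\}$ is $1$ if $l$ is odd and $0$ if $l$ is even and $d \notin (\LLx)^{q+1}$.

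The key structural point is that $h$ has a clean reparameterization. Setting $u = (1+y)^{1/q}$ and $v' = 1/u$ gives
\[
h(y) = \frac{d(u^q+1)}{u^{q+1}} = \frac{d}{u}+\frac{d}{u^{q+1}} = d(v'+v'^{q+1}),
\]
and $y \leftrightarrow v'$ is a bijection of $\LLx \setminus \{1\}$. Applying Lemma \ref{lem_imageset}(iv) to the map $v' \mapsto v'^{q+1}+v'$ then shows $h$ sends $\LLx \setminus \{1\}$ surjectively onto $dI_1 \cup dI_3$, being one-to-one over $dI_1$ and three-to-one over $dI_3$.

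Equating $|\LL \setminus \{0,1,d\}| = 2^l-3$ with $\sum_{y \in \LLx} |\gamma^{-1}(y)|$ and invoking $|I_1 \cap dI_1| = \lambda$ from Lemma \ref{lem_dd}, after substituting the parity-dependent values of $\lambda$ and $|\gamma^{-1}(1)|$ one obtains the identity
\[
|I_1 \cap dI_3| + |I_3 \cap dI_1| + 3|I_3 \cap dI_3| = 2^{l-2}-1,
\]
which holds uniformly in the parity of $l$ (the differences in $\lambda$ and $|\gamma^{-1}(1)|$ conveniently cancel). The contradictory assumption makes the first and third terms vanish, forcing $|I_3 \cap dI_1| = 2^{l-2}-1$; but the trivial bound $|I_3 \cap dI_1| \le |I_3|$ together with the explicit values $|I_3| = (2^{l-1}-1)/3$ for $l$ odd and $|I_3| = (2^{l-1}-2)/3$ for $l$ even gives $|I_3| < 2^{l-2}-1$ precisely when $l > 3$, yielding the desired contradiction.

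The main obstacle is the careful bookkeeping of the three-to-one multiplicities arising from $I_3$ in both the $w^{q+1}+w$ count and the $h$-count, together with the treatment of the degenerate fibre $y=1$ --- which is exactly the point at which the hypothesis $d \notin (\LLx)^{q+1}$ becomes essential in the $l$ even case.
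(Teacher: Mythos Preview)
Your argument is correct and, once unpacked, is essentially the same as the paper's. Both proofs compute the multiplicity of $d$ in the multiset $J=(I_1\cup I_3^{[3]})/(I_1\cup I_3^{[3]})$, use Lemma~\ref{lem_dd} to evaluate the $I_1/I_1$ piece exactly, and finish with the trivial bound $|I_3\cap dI_1|\le|I_3|$; the resulting inequality $2^{l-2}\le 2$ is identical in both. The only difference is cosmetic: the paper obtains $\mult_J(d)$ by the direct substitution $x\mapsto xy$ in $(x^{q+1}+x)/(y^{q+1}+y)=d$, whereas you reach the same count through the fibre decomposition of $\gamma(v)=(v^{q+1}+v)/(v+d)$ together with the change of variables $v'=(1+y)^{-1/q}$, which recovers precisely the multiset $d(I_1\cup I_3^{[3]})$ on the $h$-side. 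Your packaging has the minor advantage of producing the clean exact identity $|I_1\cap dI_3|+|I_3\cap dI_1|+3|I_3\cap dI_3|=2^{l-2}-1$ uniformly in the parity of $l$, but the substance is the same.
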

\begin{proof}
By Lemma \ref{lem_imageset} (iii), $I_2 = \{0\}$ and therefore $dI_3 \cap I_2 = \emptyset$.
Also the $d = 1$ case is clear. 
Let
\[
M = \frac{I_1 \cup I_3^{[3]}}{I_3^{[3]}}.
\]
The claim of the lemma is equivalent to the claim that for all $d \in \LL \setminus \f{2}$,
\[
\mult_{M}(d) > 0.
\]
We have
\[
M = \frac{I_1}{I_3^{[3]}} \cup \frac{I_3^{[3]}}{I_3^{[3]}},
\]
and
\[
J = \frac{I_1 \cup I_3^{[3]}}{I_1 \cup I_3^{[3]}} = M \cup \frac{I_1}{I_1} \cup \frac{I_3^{[3]}}{I_1}.
\]
It is clear by Lemma \ref{lem_imageset} (iii) and (iv) that
\[
I_1 \cup I_3^{[3]} = \lms x^{q+1} + x \ : \ x \in \LL \setminus \f{2} \rms.
\]
For $x,y,d \in \LL \setminus \f{2}$, we will find the number of solutions of
\[
	\frac{x^{q+1} + x}{y^{q+1} + y} = d,
\]
which is (for $y,d \in \LL \setminus \f{2}$ and $x \in \LLx \setminus \{1/y\}$) 
the same as the number of solutions of
\[
	\frac{(xy)^{q+1} + xy}{y^{q+1} + y} = \frac{x^{q+1}y^q + x}{y^{q} + 1} = d.
\]
Or, equivalently
\[
	y^q(x^{q+1}+d) = (x+d).
\]
For all $x \in \LL \setminus \f{2}$ such that $x^{q+1}\ne d$ or $x \ne d$,
there exists a (unique) $y \in  \LL \setminus \f{2}$. The equality holds when 
$x = 1/y$ if and only if $y^q d = d$, that is to say $x = y = 1$.

Thus
\[
\mult_{J}(d) = \left\{
\begin{array}{ll}
2^l - 4 & \textrm{if $l$ is odd}, \\
2^l - 6 & \textrm{if $l$ is even and $d \in (\LLx)^{q+1}$},\\
2^l - 3 & \textrm{if $l$ is even and $d \in \LLx \setminus (\LLx)^{q+1}$},
\end{array}
\right.
\]
since $\Gcd(2^k+1,2^l-1) = 3$ if $l$ is even and $1$ if $l$ is odd.
Since $I_1$ is a difference set with Singer parameters, we have by Lemma \ref{lem_dd}
\[
\mult_{I_1/I_1}(d) = \left\{
\begin{array}{ll}
2^{l-2}-1 & \textrm{if $l$ is odd}, \\
2^{l-2}   & \textrm{if $l$ is even}.
\end{array}
\right.
\]
Since for all $i \in I_3$ we have at most one $j \in I_1$ since $I_1$ is a set
by Lemma \ref{lem_imageset} (i), we have the trivial bound $\mult_{I_3^{[3]}/I_1}(d) \le |I_3^{[3]}|$.
By Lemma \ref{lem_imageset} (ii), we have,
\[
\mult_{I_3^{[3]}/I_1}(d) \le \left\{
\begin{array}{ll}
2^{l-1}-1 & \textrm{if $l$ is odd}, \\
2^{l-1}-2 & \textrm{if $l$ is even}.
\end{array}
\right.
\]

Note that by the definition of $J$, we must have,
\[
\mult_J(d) = \mult_M(d) + \mult_{I_1/I_1}(d) + \mult_{I_3^{[3]}/I_1}(d).
\]
Now assume $\mult_{M}(d) = 0$. If $l$ is odd, this means
\begin{align*}
2^l - 4 &\le 0 + 2^{l-2}-1 + 2^{l-1}-1,\\
2^{l-2} &\le 2,
\end{align*}
which means $l \le 3$. Similarly for $l$ is even and $d \in \LLx \setminus (\LLx)^{q+1}$,
\begin{align*}
2^l - 3 &\le 0 + 2^{l-2} + 2^{l-1}-2,\\
2^{l-2} &\le 1,
\end{align*}
which means $l \le 2$. 

The claim also holds for $d \in (\LLx)^{q+1}$ similarly but skipped 
since it will not be used and requires computerized check for $l = 4$.
\end{proof}

\subsection{Results on fractional projective permutations of $\PLL$}

Given two $q$-projective polynomials $f(x,1),g(x,1)$ over $\LL$, 
we can define a fractional projective map $x \mapsto f(x,1)/g(x,1)$
on $\ProjectiveLine(\LL)$ whenever $f$ and $g$ do not have a common zero.
Fractional projective permutations over a finite field $\LL$ 
of order $p^l$ has been classified for every parameter $p,k,l$ \cite{FG22FFA}. 
Recall the specific Dembowski-Ostrom polynomials of type \eqref{eq_do}.
The monomial Gold maps $X \mapsto X^s$ where 
$s \in \{ q+1,(q+1)r,qr+1,q+r \}$ on $\f{p^{2l}}$ 
with $q = p^k$ and $r = p^l$ have been shown to be connected to
the fractional projective permutations. Identifying
$\f{p^{2l}} = \LL(\xi) = \LL\xi + \LL \cong \LL \times \LL$, the
above Gold maps can be written as $(q,q)$-biprojective
polynomials $G_s(x,y) = (f_s(x,y),g_s(x,y))$ using $X = x\xi+y$.

The following lemma has been proved for general parameters in \cite{FG22FFA}.
Here we include only the results necessary for our treatment.

\begin{lemma}\label{lem_fracproj}
Let $p = 2$, $q = p^k$, $r = p^l$ and $\Gcd(k,l) = 1$. 
\begin{itemize}
\item If $l$ is odd, then
\[
	x \mapsto \frac{x^{q+1}+c}{x^q+x+d}
\]
permutes $\ProjectiveLine(\LL)$ if and only if $c \in \f{2}$ and $d = 1$.
\item If $l$ is even and $f(x,1) = 0 = g(x,1)$ does not hold for $x \in \PLL$, 
then the fractional projective map
\[
x \mapsto f(x,1)/g(x,1) 
\]
permutes $\ProjectiveLine(\LL)$ if and only if $(f,g) \GLEQ G_{q+1}$ or $(f,g) \GLEQ G_{q+r}$.
\end{itemize}
\end{lemma}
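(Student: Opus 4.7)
The plan is to split along the parity of $l$, treating each half by a different technique.

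For odd $l$, I would reduce the question to the Bluher-type counting in Lemma~\ref{lem_imageset}. Writing $\phi(x)=(x^{q+1}+c)/(x^q+x+d)$, the map permutes $\ProjectiveLine(\LL)$ iff (a) $\infty$ has a unique preimage, i.e.\ $x^q+x+d=0$ has no $\LL$-solution, which by Hilbert's Theorem~90 (Lemma~\ref{lem_hilbert}) is equivalent to $\trace{d}\neq 0$; and (b) for every $v\in\LL$ the polynomial $x^{q+1}+vx^q+vx+(vd+c)$ has a unique $\LL$-root. For $v\in\f{2}$, condition (b) follows from $\Gcd(q+1,2^l-1)=1$ (Lemma~\ref{lem_gcd}, using $l$ odd). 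For $v\not\in\f{2}$, a translation $x\mapsto x+v$ followed by an $\LLx$-dilation brings the polynomial into the Bluher form $P_b(x)=x^{q+1}+x+b$ with $b=b(v,c,d)$ an explicit rational expression; by Lemma~\ref{lem_zeroes} the root count is preserved, so (b) becomes the condition $b(v,c,d)\in I_1$ for every $v\in\LL\setminus\f{2}$. Working out when this happens uniformly in $v$, while keeping track of the trace of the relevant auxiliary element and using $\trace{1}=1$ for $l$ odd, pins the parameters down to $c\in\f{2}$ and $d=1$; conversely, for any $(c,d)$ outside this set one exhibits an explicit $v$ with $b(v,c,d)\in I_3\cup I_2$, producing a collision.

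For even $l$, I would combine the representative-set machinery of Section~\ref{sec_3} with the same Bluher calculus. Lemma~\ref{lem_sset} lets me restrict to $f\in\Sset$; for each representative, permutation of $\ProjectiveLine(\LL)$ by $x\mapsto f(x,1)/g(x,1)$ constrains the $\Peq$-class of $g$ through a preimage count using Lemmas~\ref{lem_zeroes}--\ref{lem_0123}. The surviving pairs are then matched to the biprojective forms of $G_{q+1}$ and $G_{q+r}$ obtained by expanding $X^{q+1}$ and $X^{q+r}$ under the identification $X=x\xi+y$; Lemma~\ref{lem_gcd}\,(ii), which gives $\Gcd(q+1,2^l-1)=3$ for $l$ even and so the index-$3$ coset structure of $(\LLx)^{q+1}$ in $\LLx$, is exactly what separates the two $\GLEQ$-orbits.

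The main obstacle is the \textbf{only if} direction in the even case: ruling out every non-Gold pair $(f,g)$ uniformly in $q$ and $l$ requires careful bookkeeping, since collisions in $\phi$ can come from several algebraic sources (the zero structure of $f$, of $g$, and of their linear combinations), and most of the work consists in producing explicit collisions whose existence depends on trace obstructions vanishing. In practice, one either specialises the general classification of \cite{FG22FFA}, which treats arbitrary characteristic and arbitrary $(k,l)$, or replays the combinatorial core of that argument in the simpler subcase at hand. Since in the present paper the lemma is needed only as a black box, the proof reduces to invoking \cite{FG22FFA} and extracting the two specific statements above.
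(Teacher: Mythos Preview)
The paper does not prove this lemma at all: it is stated with the preamble ``The following lemma has been proved for general parameters in \cite{FG22FFA}'' and used thereafter as a black box. Your closing sentence identifies exactly this, so your proposal matches the paper's treatment; the Bluher-style sketch you give beforehand is plausible supplementary detail (and may well track the argument in \cite{FG22FFA}), but it is not part of the present paper and is not needed here.
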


Note that in the even case $G_{q+1}$ and $G_{q+r}$ are APN, since
$\Gcd(k,l) = \Gcd(k,2l) = \Gcd(k+l,2l) = 1$ by \cite{gold} which
states that $G_{2^i+1}$ is APN over $\f{2^n}$ if and only if $\Gcd(i,n)=1$.

\section{The classification}\label{sec_5}

First we will prove a necessary $\Gcd$-condition on a $(q,q)$-biprojective
APN function $F$.

\begin{proposition}\label{prop_gcd}
Let $q = 2^k$ and $\Gcd(k,l)>1$. Then $F \in \Fset$ is not APN.   
\end{proposition}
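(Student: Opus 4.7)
The plan is to exploit the $\DD$-linearity of the derivatives $\DE{f}{u}$ and $\DE{g}{u}$ when $\delta = \gcd(k,l) > 1$, where $\DD = \f{2^\delta}$. Setting $q = 2^k = (2^\delta)^{k/\delta}$, the Frobenius map $x \mapsto x^q$ fixes $\DD$ pointwise and satisfies $(\alpha x)^q = \alpha x^q$ for every $\alpha \in \DD$, so $x \mapsto x^q$ is $\DD$-linear on $\LL$. Reading off the explicit expressions
\[
\DE{f}{u}(x,y) = (a_0 u + b_0) x^q + (a_0 u^q + c_0) x + (c_0 u + d_0) y^q + (b_0 u^q + d_0) y
\]
(and the analogous formulas for $u \in \{0,\infty\}$, and for $g$), one sees that $\DE{f}{u}$ and $\DE{g}{u}$ are $\LL$-linear combinations of the $\DD$-linear functionals $x,x^q,y,y^q$, hence are $\DD$-linear maps $\LL \times \LL \to \LL$.

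Consequently, for every fixed $u \in \ProjectiveLine(\LL)$, the solution set
\[
Z_u \;=\; \{(x,y)\in \LL\times\LL \ : \ \DE{f}{u}(x,y) = 0 = \DE{g}{u}(x,y)\}
\]
is the intersection of two $\DD$-linear kernels, and therefore a $\DD$-subspace of the $\DD$-vector space $\LL \times \LL$. Its cardinality must be a power of $|\DD| = 2^\delta$; since $\delta \ge 2$, this cardinality lies in $\{1\} \cup \{2^{\delta}, 2^{2\delta}, \ldots\} \subseteq \{1\} \cup \{4,8,16,\ldots\}$, and in particular $|Z_u| \ne 2$.

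By Lemma \ref{lem_cond}, $F$ fails to be APN because the equation has $|Z_u| \neq 2$ solutions for (any, in fact every) $u \in \ProjectiveLine(\LL)$. The argument is entirely structural and uses no delicate case analysis; the only small point worth verifying is the $\DD$-linearity of the coefficient pattern in $\DE{f}{0}$ and $\DE{f}{\infty}$, but this is immediate from their displayed forms since $d_0(y^q+y)$ and $a_0(x^q+x)$ are visibly $\DD$-linear. I do not anticipate any genuine obstacle here; the entire content of the proposition is the incompatibility between the fixed parity requirement $|Z_u| = 2$ imposed by APN-ness and the $\DD$-vector-space structure forced by $\delta>1$.
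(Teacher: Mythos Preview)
Your proof is correct and rests on the same underlying observation as the paper: since $x\mapsto x^q$ is $\DD$-linear for $\DD=\f{2^{\delta}}$ with $\delta=\Gcd(k,l)$, the solution sets $Z_u$ are $\DD$-subspaces and hence cannot have exactly two elements when $\delta>1$. The paper argues slightly more concretely, fixing $u=0$ and simply exhibiting the solutions $\{0\}\times\Ker(y\mapsto y^q+y)=\{0\}\times\DD$, which already has $2^{\delta}>2$ elements; your version is a mild abstraction of the same idea, valid for every $u$ simultaneously.
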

\begin{proof}
Let $F = (f,g) = ((a_0,b_0,c_0,d_0)_q,(a_1,b_1,c_1,d_1)_q)$.
By Lemma \ref{lem_cond}, if $F$ is APN, then 
\begin{align*}
b_1 x^q + c_1 x &= d_1(y^q + y),\\
b_0 x^q + c_0 x &= d_0(y^q + y),
\end{align*}
has two solutions in $\LL \times \LL$. Solutions to this equation pair include 
$(x,y) \in \{0\} \times \Ker(y^q+y)$. Thus $\Gcd(k,l)=1$. 
\end{proof}

Now we can concentrate on the case $\Gcd(k,l)=1$. We can
assume $(f,g) \in \Sset \times \Vset$ where $\Sset$ is found
in Lemma \ref{lem_sset}. We will first deal with the case $f \in S$
(see Lemma \ref{lem_sset} for the definitions of $S$ and $\Sset$ we use).

\subsection{The case $f \in S$}

\begin{proposition}\label{prop_s}
Let $q = 2^k$, $l > 3$, $\Gcd(k,l) = 1$ and 
\begin{align*}
F : \LL \times \LL &\to \LL \times \LL,\\
            (x,y)  &\mapsto (f(x,y),g(x,y)),
\end{align*}
where $f \in S \subset \Sset$ and $g \in \Vset$. Then $F$ is not APN.
\end{proposition}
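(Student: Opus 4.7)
The plan is to apply Lemma~\ref{lem_cond}, which reduces ``$F = (f, g)$ is APN'' to the requirement that $\dim_{\f{2}}(\Ker \DE{f}{u} \cap \Ker \DE{g}{u}) = 1$ for every $u \in \PLL$, and then split on which of the four shapes $(0,0,0,0)_q$, $(0,0,0,1)_q$, $(0,0,1,0)_q$, or $(1,0,0,a)_q$ (with $a \in \LLx$) the polynomial $f$ takes. The first three are disposed of by short arguments. If $f \equiv 0$, then $\DE{f}{u} \equiv 0$, so $\Ker \DE{g}{u}$ must itself be one-dimensional; but $\DE{g}{u} \colon \LL \times \LL \to \LL$ is $\f{2}$-linear with kernel of $\f{2}$-dimension at least $l > 1$. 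If $f = y^{q+1}$, the definitions yield $\DE{f}{\infty} \equiv 0$ and the previous argument applies at $u = \infty$. If $f = xy^q$, then $\Ker \DE{f}{0} = \{0\} \times \LL$ and $\Ker \DE{f}{\infty} = \LL \times \{0\}$; restricting $\DE{g}{u}$ to these forces $d_1 \ne 0$ and $a_1 \ne 0$ via $\Gcd(q - 1, r - 1) = 1$ (Lemma~\ref{lem_gcd}). For a generic $u \in \LLx$ the kernel $\{(u y^q, y) : y \in \LL\}$ of $\DE{f}{u}$ then turns $\DE{g}{u}$ restricted there into a nonzero $q^2$-linearized polynomial in $y$, whose $\LL$-kernel cannot have size exactly $2$ for $l > 1$ after a short coefficient split.

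\textbf{The main case.} Let $f = x^{q+1} + a y^{q+1}$ with $a \in \LLx$. I would first exploit $u = 0$: here $\Ker \DE{f}{0} = \LL \times \f{2}$, and restricting $\DE{g}{0}(x, y) = b_1 x^q + c_1 x + d_1(y^q + y)$ to this kernel gives $b_1 x^q + c_1 x$, independent of $y \in \f{2}$. The count-two condition, together with the bijectivity of $x \mapsto x^{q-1}$ on $\LLx$ (Lemma~\ref{lem_gcd}), forces $b_1 = 0$ and $c_1 \ne 0$. A symmetric analysis at $u = \infty$ yields $a_1 \ne 0$, so $g = (a_1, 0, c_1, d_1)_q$ with $a_1 c_1 \ne 0$.

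\textbf{Conclusion via Lemma~\ref{lem_multiset}.} For $u \in \LLx$ with $a \ne u^{q+1}$ I would parametrize $\Ker \DE{f}{u}$ by setting $x = u z$, obtaining $u^{q+1}(z^q + z) = a(y^q + y)$, an $l$-dimensional $\f{2}$-subspace of $\LL \times \LL$. Restricting $\DE{g}{u}$ to this subspace and demanding a one-dimensional kernel converts the APN condition (via Hilbert~90, Lemma~\ref{lem_hilbert}) into the statement that a specific rational expression in $u$, $a$, and the coefficients of $g$ returns a value $b = b(u) \in \LL$ for which $P_b = x^{q+1} + x + b$ must avoid $\Pi_1 \cup \Pi_3$, i.e., $b(u) \notin I_1 \cup I_3$, for every admissible $u$. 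Rewriting, this exhibits a multiplicative translate $d \cdot I_3$ disjoint from $I_1 \cup I_2 \cup I_3$ for an appropriate $d \in \LLx$ (or $d \in \LLx \setminus (\LLx)^{q+1}$ when $l$ is even). Lemma~\ref{lem_multiset} rules this out precisely when $l > 3$, yielding the contradiction. The at most three exceptional values $u$ with $a = u^{q+1}$ (from $\Gcd(q+1,r-1) \le 3$, Lemma~\ref{lem_gcd}) are handled by a separate direct argument exploiting the $(l + 1)$-dimensional kernel structure.

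\textbf{Main obstacle.} The principal difficulty is the bookkeeping in the main case: translating the family of $u$-dependent APN constraints on $g = (a_1, 0, c_1, d_1)_q$ into the multiset condition of Lemma~\ref{lem_multiset} requires careful tracking of the Hilbert~90 image of $y \mapsto y^q + y$ inside $\Ker \DE{f}{u}$ and identifying the precise projective polynomial $P_b$ that encodes the one-dimensional intersection. Once this reduction is in place, Lemma~\ref{lem_multiset} supplies the clean $l > 3$ threshold that matches the statement of Proposition~\ref{prop_s}.
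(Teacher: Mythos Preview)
Your overall architecture (case split on $f\in S$ and an appeal to Lemma~\ref{lem_multiset} for the $l>3$ threshold) is the same as the paper's, but you have the two nontrivial cases swapped and the ``easy'' one is not actually easy.

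\textbf{The gap at $f=(0,0,1,0)_q$.} After substituting $x=uy^q$ you obtain, for each $u\in\LLx$, a polynomial
\[
A(u)\,y^{q^2}+B(u)\,y^q+C(u)\,y
\]
with $A(u)+B(u)+C(u)=0$ (check the coefficients), so $\f{2}$ is \emph{always} contained in its kernel. Writing $h=y^q+y$ this becomes $A(u)h^q+C(u)h=0$; since $\Gcd(q-1,r-1)=1$ there is a unique $h_0(u)\in\LLx$ with $h_0^{\,q-1}=C(u)/A(u)$, and the kernel exceeds $\f{2}$ precisely when $h_0(u)\in\HH$. Nothing about ``a short coefficient split'' forces $h_0(u)\notin\HH$ for every $u$. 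In the paper this is exactly where the hard work sits: one rewrites the condition as $x^{q+1}+x+b(u)=0$ having no root, identifies $\lms b(u)\rms$ with $d\cdot I_3$ via Lemma~\ref{lem_imageset}, and invokes Lemma~\ref{lem_multiset}. So this case needs the full Dillon--Dobbertin argument, not a two-line dismissal.

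\textbf{Errors in the preliminary deductions for $f=(1,0,0,a)_q$.} At $u=0$ the restriction of $\DE{g}{0}$ to $\LL\times\f{2}$ is $b_1x^q+c_1x$, and since $x\mapsto x^{q-1}$ is bijective on $\LLx$ this has exactly two zeroes whenever $b_1c_1\ne 0$; so the count-two condition fails and you only conclude $b_1c_1=0$ with not both zero, not specifically $b_1=0$. More seriously, the $u=\infty$ restriction to $\f{2}\times\LL$ is $c_1y^q+b_1y$, which involves $b_1,c_1$ again and says nothing about $a_1$; your claim that $u=\infty$ forces $a_1\ne 0$ is unsupported. When $a_1=0$ you get $g=(0,0,c_1,d_1)_q\in\Pi_2$, and this is precisely how the paper proceeds: it observes $g\Peq(0,0,1,0)_q$, swaps the roles of $f$ and $g$, and falls back on the already-proved $(0,0,1,0)_q$ case. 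In other words, the paper does the Lemma~\ref{lem_multiset} computation once, in the $xy^q$ case, and reduces the $(1,0,0,a)_q$ case to it in two lines; you are trying to do the opposite, but your short disposal of $xy^q$ does not go through.
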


\begin{proof}
We will analyze each function $f \in S$ case by case. 
We let $g = (a_1,b_1,c_1,d_1)_q$. 
\begin{itemize}
\item $f \in \{ (0,0,0,0)_q, (0,0,0,1)_q \}$.

We have
$
 \DE{f}{\infty}(x,y) = 0
$
for all $(x,y)$ and $\DE{g}{\infty}(x,y) = 0$ if and only if
\[
a_1 x^q + a_1 x = c_1 y^q + b_1 y.
\]
If $a_1 = 0$ or $b_1 = c_1 = 0$, then the claim easily follows
since $(x,y) \in \LL \times \{0\}$ (or $\{0\} \times \LL$, resp.) satisfy the equality.
Otherwise $|\Image{a_1 x^q + a_1 x} \cap \Image{c_1 y^q + b_1 y}| \ge 2^{l-2}$,
since both image sets are at least $l-1$ dimensional $\f{2}$-vector spaces
which have to intersect at a vector space with dimension at least $l-2$.

\item $f = (0,0,1,0)_q$.

Since $(f,g) \GLEQ (f,rf+sg)$ if and only if $s\ne0$ we can assume that $c_1 = 0$.
The equalities for $\DE{f}{u}(x,y) = 0 = \DE{g}{u}(x,y) = 0$ for $u \in \{0,\infty\}$ give
\begin{align*}
 \DE{f}{0}(x,y) &= x = 0,\\
 \DE{g}{0}(x,y) &= b_1 x^q + d_1 y^q + d_1 y = 0,
\end{align*}
and
\begin{align*}
 \DE{f}{\infty}(x,y) &= y^q = 0,\\
 \DE{g}{\infty}(x,y) &= a_1 x^q + a_1 x + b_1 y.
\end{align*}
imply that $a_1 \ne 0$ and $d_1 \ne 0$. Thus, setting $a_1 = 1$ by the scaling action, 
we will check the common solutions of
\begin{align*}
 \DE{f}{u}(x,y) &=  x + u y^q = 0,\\
 \DE{g}{u}(x,y) &= (u + b_1) x^q + u^q x + d_1 y^q + (b_1 u^q + d_1) y = 0,
\end{align*} 
for $u \in \LL^\times$. Replacing $x$ by $uy^q$ in the second equation
\begin{align*}
 \DE{g}{u}(x,y) &= (u + b_1) (uy^q)^q + u^q (uy^q) + d_1 y^q + (b_1 u^q + d_1) y = 0,\\
&= (u^{q+1}  + b_1 u^q ) y^{q^2} + (u^{q+1}+d_1) y^q + (b_1 u^q + d_1) y = 0,\\
&= u^{q+1}(y^{q^2} + y^q) + b_1 u^q (y^{q^2} + y) + d_1 (y^q + y) = 0.
\end{align*}
Thus for every $u \in \LL^\times$, all $(uy^q,y)$ for $y \in \f{2}$ give a solution. 
To be APN, these should be the only solutions. 
We will use the fact that $y^q + y = h \in \HH \setminus \{0\}$, 
for $y \in \LL \setminus \f{2}$ by Hilbert's Theorem 90.
If $b_1 = 0$, then
\[
\frac{d_1}{u^{q+1}} = \frac{h^q}{h}
\]
has a solution for every $h \in \HH \setminus \{0\}$.
Now, applying $u \mapsto ub_1$ we get, by setting $d^q = d_1/b_1^{q+1}$, 
\begin{align*}
u^{q+1}h^q + u^q (h^q + h) + d^q h       &= 0,\\
hv^{q+1} + v (h^q+h) + d h^q &= 0,
\end{align*}
setting $v = d/u$ and then multiplying the equality by $v^{q+1}/d^q$. 
Now if $1 \in \HH$ (i.e., $l$ is even) then $d \not \in (\LLx)^{q+1}$
for $F$ to be APN. For $h \in \HH \setminus \f{2}$, setting 
$v = x((h^{q}+h)/h)^\qbar$, we get
\[
x^{q+1} + x + \frac{d h^{q+\qbar}}{(h^q+h)^{\qbar+1}} = 0.
\]
We have
\[
\lms h^{q+\qbar}/(h^q+h)^{\qbar+1} \ : \ h \in \HH \setminus \f{2} \rms  = 
\lms h^{q^2+1} /(h^q+h)^{q+1}      \ : \ h \in \HH \setminus \f{2} \rms  =
I_3^{[3]},
\]
by Lemma \ref{lem_imageset} (ii), since $h \in \HH$ if and only if $h^q \in \HH$.
Now $F$ is APN if and only if 
$
  d I_3 \cap (I_1 \cup I_2 \cup I_3) = \emptyset.
$
By Lemma \ref{lem_multiset} we have 
$
	d I_3 \cap (I_1 \cup I_2 \cup I_3) \ne \emptyset,
$
for $l > 3$ and we are done.

\item $f = (1,0,0,d_0)_q$ for $d_0 \in \LLx$.

We can assume $a_1 = 0$. We have
\begin{align*}
 \DE{f}{0}(x,y) &= d_0(y^q + y) = 0,\\
 \DE{g}{0}(x,y) &= b_1 x^q + c_1 x + d_1 (y^q + y) = 0.
\end{align*}
Thus either $c_1 = 0$ or $b_1 = 0$ (but not both), and since
$(0,b_1,0,d_1)_q \Peq (0,0,c_1,d_1)_q \Peq (0,0,1,0)_q$ we are done as
we have already handled that case before.
\end{itemize}
\end{proof}

\begin{remark}
In the proof of the case $f = (0,0,1,0)_q$ we assumed $l > 3$ 
and we used Lemma \ref{lem_multiset} to show that
$F$ is not APN. For $l = 3$, the necessary and sufficient condition
\[
d I_3 \cap (I_1 \cup I_2 \cup I_3) = \emptyset,
\]
holds for $d \in J = \{ \omega, \omega^2, \omega^4 \}$ where
$\omega \in \f{2^3}$ satisfying $\omega^3 + \omega + 1 = 0$.
In this case
\begin{align*}
I_0 &= \{ \omega, \omega^2, \omega^4 \},\\
I_1 &= \{ \omega^3, \omega^5, \omega^6 \},\\
I_2 &= \{ 0 \}, \textrm{ and}\\
I_3 &= \{1\}.
\end{align*}
It is easy to see the direct product satisfies $J I_3 = I_0$.
Thus $F : \f{2^3} \times \f{2^3} \to \f{2^3} \times \f{2^3}$,
with $F = ((0,0,1,0)_2,(1,b_1,0,d_1)_2)$ for $b_1,d_1 \in \f{2^3}$
is APN if and only  if $\omega^{2^i} = d_1/b_1^3$. These are 
precisely the Kim $\kappa$ functions (up to $\Peq$ equivalence).
\end{remark}

\begin{remark}\label{rem_butterfly}
Note that the case $(f,g) \in \{ (1,0,0,1)_q \} \times \Pi_1$ proved as a subcase 
in Proposition \ref{prop_s}
generalizes the results in \cite{CPT,CDP} that show that the class of functions satisfying
the \textbf{(generalized) butterfly structure} are not APN when $l > 3$ is odd. These functions are
by definition \cite{CPT},
\[
	\mathcal{B}_{q,\LL} = \{ ((x+ay)^{q+1}+(by)^{q+1},(y+ax)^{q+1}+(bx)^{q+1}) \ : \ a,b \in \LLx \}.
\]
It is easy to see via $x \mapsto x + ay$ and then $y \mapsto y/b$ for the left component 
(and similarly $y \mapsto y + ax$ and $x \mapsto x/b$ for the right component) 
that $(f,g) \in \mathcal{B}_{q,\LL} \subset \Pi_1 \times \Pi_1$. 

Note that $\Pi_1 \times \Pi_1$ contains functions ($\CCZ$-equivalent to permutations)
that are not contained directly in $\mathcal{B}_{q,\LL}$ and whether the 
$\CCZ$-equivalence class (or $\GLEQ$ class) of $\mathcal{B}_{q,\LL}$ covers all
such functions is not covered in \cite{CPT,CDP} and seems to be difficult to solve.
The question is easy to answer for the right action of $\GL(2,\LL)$ together with
scaling on both components 
(the natural subgroup action $(\LLx \times \LLx) \times \GL(2,\LL)$ that 
preserves inclusion in $\Pi_1$). 
After the transformations ($x \mapsto x + ay$ and then $y \mapsto y/b$) 
on the left part and applying all $\GL(2,\LL)$ transformations stabilizing the 
left part $(1,0,0,1)_q$ (generated by $(x,y) \mapsto (y,x)$) and scaling on the 
right side, we see
\[
\mathcal{B'}_{q,\LL} = \{ (1,0,0,1)_q \} \times 
                \left( \{ h_{a,b,c}(x,y) \ : \ a,b,c \in \LLx \} 
									\cup \{ h_{a,b,c}(y,x) \ : \ a,b,c \in \LLx \} \right),
\]
where
\[
h_{a,b,c}(x,y) = c ((a+1)y/b + ax)^{q+1} + (bx + ay)^{q+1}).
\]
The cardinality of this set is  $|\mathcal{B'}_{q,\LL}| \le 2^{3l+1}$ whereas
$|\Pi_1| = (2^{2l}-1)(2^{2l}-2^l)/2 \approx 2^{4l-1}$. Thus $\mathcal{B'}_{q,\LL}$ is strictly 
included in $\{ (1,0,0,1)_q \} \times \Pi_1$.

Note also that the butterfly structure is defined only for odd $l$ whereas 
Proposition \ref{prop_s} covers the even case as well. We also note that 
$\Pi_1 \times \Pi_1$ and, in particular, the functions from the generalized 
butterfly construction seems to be a good source for cryptographically 
interesting functions. The engineering aspects of the butterflies were explained
in \cite{PUB}.
\end{remark}

\subsection{The even $l$ case}

By Lemma \ref{lem_sset} we can assume $f \in \Pi_1$ when $l$ is even.

\begin{proposition}\label{prop_even}
Let $q = 2^k$, $l > 3$ even, $r = 2^l$, $\Gcd(k,l) = 1$ and 
\begin{align*}
F : \LL \times \LL &\to \LL \times \LL,\\
            (x,y)  &\mapsto (f(x,y),g(x,y)),
\end{align*}
where $f \in \Pi_1$ and $g \in \Vset$. 
Then $F$ is APN if and only if 
$ F \GLEQ G_{q+1}$ or $F \GLEQ G_{q+r}$.
\end{proposition}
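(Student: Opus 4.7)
The \emph{if} direction follows from \cite{gold}: since $\Gcd(k,l)=1$ with $l$ even forces $k$ odd, we have $\Gcd(k,2l)=\Gcd(k+l,2l)=1$, so both $G_{q+1}$ and $G_{q+r}$ are APN on $\f{2^{2l}}$ (as already noted after Lemma~\ref{lem_fracproj}).

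For the \emph{only if} direction, my plan is to show that the APN hypothesis forces the fractional projective map $x\mapsto \phi_f(x)/\phi_g(x)$ on $\PLL$ to be a permutation, and then to invoke the $l$-even case of Lemma~\ref{lem_fracproj} to conclude $(f,g)\GLEQ G_{q+1}$ or $(f,g)\GLEQ G_{q+r}$. First, by the characteristic-$2$ form of Lemma~\ref{lem_13}(ii), every $f\in\Pi_1$ is $\Peq$-equivalent to $(0,1,1,\alpha)_q$ with $\trace{\alpha}=1$; this equivalence lifts to a $\GLEQ$-equivalence on $(f,g)$ (right $\GL(2,\LL)$-action on both components composed with a left diagonal scaling), so I may take $f$ in this standard form. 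Then $\phi_f(x)=x^q+x+\alpha$ has no $\LL$-zeros and its sole $\PLL$-zero is $\infty$. Writing $g=(a_1,b_1,c_1,d_1)_q$, I extract constraints from Lemma~\ref{lem_cond}: at $u=\infty$, $\DE{f}{\infty}=y^q+y$ vanishes exactly on $y\in\f{2}$, so the joint system having exactly two solutions forces $a_1\neq 0$ (else $y=0$ alone contributes $2^l$ solutions); after a further left scaling I take $a_1=1$, and the $y=1$ sub-case then forces $\trace{b_1+c_1}=1$. An analogous analysis at $u=0$ couples $b_1,c_1,d_1,\alpha$.

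The main technical work is the generic-$u$ analysis. For $u\in\LLx$, the coefficient of $x^q$ in $\DE{f}{u}$ equals $1$ under the normalization, so one eliminates $x^q$ between $\DE{f}{u}=0$ and $\DE{g}{u}=0$, expressing $x$ as an $\LL$-linear combination of $y$ and $y^q$; substituting back into $\DE{f}{u}=0$ reduces the joint system to a single $\f{2}$-linear equation in $y$ (involving $y,y^q,y^{q^2}$) that must, by the APN hypothesis, have exactly two $\LL$-solutions. I aim to show that this uniform condition is equivalent to $\phi_f$ and $\phi_g$ having no common $\PLL$-zero (well-definedness of $\phi_f/\phi_g$ on $\PLL$) together with injectivity of $\phi_f/\phi_g:\PLL\to\PLL$. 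The absence of a common zero follows from $a_1\neq 0$ (ruling out a shared $\infty$-zero) combined with the extracted constraints on $(b_1,c_1,d_1,\alpha)$ (ruling out common affine zeros). The injectivity follows from identifying, for each $u\in\PLL$, the unique nonzero direction $[v_u]\in\PLL$ in the joint $\f{2}$-kernel of $(\DE{f}{u},\DE{g}{u})$ with the fiber of $\phi_f/\phi_g$ above some point of $\PLL$.

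The hard part will be this last identification --- converting the pointwise APN condition (joint $\f{2}$-kernel of $\f{2}$-dimension exactly one at every $u$) into the global injectivity of $\phi_f/\phi_g$. The biprojective $\LLx$-symmetry naturally projects $\f{2}$-linear kernels onto $\PLL$-points, but careful bookkeeping is needed to exclude degenerate configurations in which $\DE{f}{u}$ and $\DE{g}{u}$ become $\LL$-proportional at exceptional $u$, or the joint $\f{2}$-kernel acquires $\f{2}$-dimension $\geq 2$ there; the $u=0$ and $u=\infty$ constraints extracted above should be key to ruling these out. Once the permutation property is established, Lemma~\ref{lem_fracproj} (even case) immediately yields $(f,g)\GLEQ G_{q+1}$ or $(f,g)\GLEQ G_{q+r}$.
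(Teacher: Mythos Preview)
Your plan has a genuine gap at exactly the point you flag as ``the hard part.'' The paper's proof of this proposition is short precisely because it does \emph{not} try to pass directly from the derivative condition of Lemma~\ref{lem_cond} to bijectivity of $\phi_f/\phi_g$. Instead it uses the left $\GL(2,\LL)$-action together with Proposition~\ref{prop_s}: since $(f,g)\GLEQ(rf+sg,\,tf+ug)$ for any invertible $\begin{psmallmatrix}r&s\\t&u\end{psmallmatrix}$, if $F$ is APN then every nonzero combination $rf+sg$ must avoid the set $S$ of Lemma~\ref{lem_sset}; for $l$ even this forces $rf+sg\in\Pi_1$ for all $(r,s)\neq(0,0)$. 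A short common-zero argument then shows $f,g$ have no common $\PLL$-zero, and ``every combination has exactly one $\PLL$-zero'' is literally the statement that $\phi_f/\phi_g$ is a bijection of $\PLL$, after which Lemma~\ref{lem_fracproj} finishes.

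Your route tries to obtain the same bijectivity directly from the pointwise condition on the joint $\f{2}$-kernel of $(\DE{f}{u},\DE{g}{u})$, but the proposed ``identification'' of the unique kernel direction $[v_u]$ with a fiber of $\phi_f/\phi_g$ does not type-check: the kernel direction is a point of the projective line on $\LL\times\LL$ cut out by $\f{2}$-linear forms in $(x,y)$, whereas fibers of $\phi_f/\phi_g$ are preimages in $\PLL$ under a degree-$(q{+}1)$ rational map in the single variable $x$. There is no evident bijection between these two families indexed by $u$, and your sketch gives none. More substantively, ruling out the possibility that some $rf+sg$ lands in $\Pi_2$ (equivalently, that $\phi_f/\phi_g$ has a fiber of size~$2$) is exactly the case of Proposition~\ref{prop_s} that relies on the Dillon--Dobbertin input of Lemma~\ref{lem_multiset}; this is what separates $l>3$ from $l=3$ (where $\kappa$ appears). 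Your derivative computation at $u\in\{0,\infty\}$ recovers only soft constraints ($a_1\ne 0$, $\trace{b_1+c_1}=1$, etc.) and cannot by itself exclude such $g$. To fix the argument, invoke Proposition~\ref{prop_s} via the left action as above; the elimination you set up for generic $u$ is then unnecessary.
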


\begin{proof}
By Lemma \ref{lem_sset} and Proposition \ref{prop_s}, we must have
$g \in \Pi_1$. Assume $f(x_0,1) = 0 = g(x_0,1)$ for some 
$x_0 \in \ProjectiveLine(\LL)$, then since both $f,g$ have
only one $\PLL$-zero, there exists $x_1 \in \LL$ such that
$f(x_1,1)g(x_1,1) \ne 0$. Now $f(x,1)+rg(x,1) = 0$ has 
at least two $\PLL$-zeroes $\{x_0,x_1\}$ where the nonzero
$r$ is chosen to satisfy
$
r = \frac{f(x_1,1)}{g(x_1,1)}.
$
Thus $(f,f+rg) \GLEQ (f,g)$ is not APN by Proposition \ref{prop_s}.

Therefore we can assume that $f$ and $g$ do not have a common zero.
We must have $rf(x,1)+sg(x,1) \in \Pi_1$ for every 
$(r,s) \in \LL \times \LL \setminus \{(0,0)\}$.
That is to say
\[
\pi(x) = \frac{f(x,1)}{g(x,1)} = \frac{s}{r}
\]
has a unique solution $x \in \PLL$ for every $s/r \in \PLL$, 
i.e., $x \mapsto \pi(x)$ is bijective.
That is to say 
\[
F \GLEQ G_{q+1} \textrm{ or } F \GLEQ G_{q+r}, 
\]
by Lemma \ref{lem_fracproj}.

\end{proof}

\subsection{The odd $l$ case}

By Lemma \ref{lem_sset}, we can assume that $f \in \{(0,1,1,0)_q\} \cup \Pi_0 $ when $l$ is odd.

\begin{proposition}\label{prop_odd}
Let $q = 2^k$, $l > 3$ odd, $r = 2^l$, $\Gcd(k,l) = 1$ and 
\begin{align*}
F : \LL \times \LL &\to \LL \times \LL,\\
            (x,y)  &\mapsto (f(x,y),g(x,y)),
\end{align*}
where $f \in \{(0,1,1,0)_q\} \cup \Pi_0$ and $g \in \Vset$. 
Then $F$ is APN if and only if 
\begin{itemize}
\item $F \GLEQ G_{q+1}$ if $k$ is odd, and
\item $F \GLEQ G_{q+r}$ if $k$ is even.
\end{itemize}
\end{proposition}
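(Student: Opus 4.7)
The plan is to parallel the argument of Proposition~\ref{prop_even}, with adjustments specific to the odd-$l$ regime. First, combining Lemma~\ref{lem_sset} with Proposition~\ref{prop_s}: if $F=(f,g)$ is APN with $f\in\{(0,1,1,0)_q\}\cup\Pi_0$, then no nontrivial $\LL$-linear combination $\alpha f+\beta g$ can lie in $[S]_{\Peq}=D\cup\Pi_1\cup\Pi_2$; otherwise, a suitable $\GL(2,\LL)\times\GL(2,\LL)$ transformation would move the first component of $F$ into $S$, contradicting Proposition~\ref{prop_s} (applicable since $l>3$). Hence every nonzero combination lies in $\Pi_0\cup\Pi_3$, and in particular $g\in\Pi_0\cup\Pi_3$.

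Second, the counting identity $\sum_{(\alpha:\beta)\in\PLL}|Z_{\alpha f+\beta g}|=r(c+1)+1$, where $c=|Z_f\cap Z_g|$ and each summand lies in $\{0,3\}$, forces $3\mid r(c+1)+1$, and hence $c\in\{0,3\}$ (using $2^l\equiv 2\pmod 3$ for $l$ odd). The case $c=3$ would imply that $f$ and $g$ are $\LL$-proportional, since an element of $\Pi_3$ is determined up to scaling by its three $\PLL$-zeros (a short case analysis after sending the common zeros to $\{0,1,\infty\}$); this contradicts the linear independence needed for $F$ to be APN. So $c=0$, and exactly $(r+1)/3$ members of the pencil lie in $\Pi_3$. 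In particular there is a $\Pi_3$-combination, and after using a left $\GL(2,\LL)$-action to replace $f$ by it and then the $\Peq$-transitivity from Lemma~\ref{lem_13}(iii) (together with Lemma~\ref{lem_unibi}), we may assume $f=(0,1,1,0)_q$.

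Set $g=(a_1,b_1,c_1,d_1)_q$. Lemma~\ref{lem_cond} at $u\in\{0,\infty\}$, combined with Hilbert's Theorem~90, forces $a_1,d_1\ne 0$ together with $\trace{(b_1+c_1)/a_1}=\trace{(b_1+c_1)/d_1}=1$. Using the residual output-side right-multiplications $g\mapsto cf+dg$ (with $d\ne 0$), which fix $f$, we normalize $g$ to $g=(a,0,\sigma,1)_q$ with $a\ne 0$ and $\trace{\sigma}=\trace{\sigma/a}=1$. For $u\in\LLx$, eliminating $x^q$ between $\DE{f}{u}=0$ and $\DE{g}{u}=0$ reduces the APN requirement to the statement that an associated linearized polynomial in $y$, with coefficients explicit in $(a,\sigma,u)$, has exactly two $\LL$-zeros for each $u\in\LLx$.

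I expect the main obstacle to be pinning down $(a,\sigma)$ from this $u$-parametrized family of constraints. The plan is to rewrite each constraint, after an appropriate $u$-dependent substitution, as a nonvanishing statement for a projective trinomial $P_{b(u)}(w)=w^{q+1}+w+b(u)$; then Lemma~\ref{lem_imageset} together with the Dillon--Dobbertin difference-set structure of Lemma~\ref{lem_dd} recasts the whole family as an emptiness-of-intersection assertion of the form $\{b(u):u\in\LLx\}\cap(I_1\cup I_2\cup I_3)=\emptyset$, in the spirit of the $f=(0,0,1,0)_q$ case of Proposition~\ref{prop_s}. Matching this condition against the bivariate expansions of the Gold maps $G_{q+1}$ and $G_{q+r}$, computed via $\f{r^2}=\LL(\xi)$ with $\xi^2+\xi+\beta=0$ and $\gamma_k=\sum_{j=0}^{k-1}\beta^{2^j}$, should force the admissible $(a,\sigma)$ to be exactly those of $G_{q+1}$ when $k$ is odd (so that $\Gcd(k,2l)=1$ by Lemma~\ref{lem_gcd}) and of $G_{q+r}$ when $k$ is even (so that $\Gcd(l-k,2l)=1$), matching the APN Gold criterion of \cite{gold} and the parity split in the statement.
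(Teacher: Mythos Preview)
Your opening reduction is correct and in places more careful than the paper's: the pencil-counting identity $\sum_{(\alpha:\beta)\in\PLL}|Z_{\alpha f+\beta g}|=r(c+1)+1$ is a clean way to exclude common zeroes and to guarantee a $\Pi_3$-member in the pencil (the paper dispatches this with a one-word ``absurd''), and your normalization $f=(0,1,1,0)_q$, $g=(a,0,\sigma,1)_q$ is equivalent to the paper's $g=(1,0,c,d)_q$ after rescaling.

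The gap is the main step. Going back to Lemma~\ref{lem_cond} and trying to reduce each $u\in\LLx$ to a Bluher condition $P_{b(u)}(w)=0$ is not the right move here. In the $f=(0,0,1,0)_q$ case of Proposition~\ref{prop_s} this worked because $\DE{f}{u}=0$ reads $x=uy^q$, killing $x$ outright; for $f=(0,1,1,0)_q$ one has $\DE{f}{u}(x,y)=x^q+x+uy^q+u^qy$, and although one can still (after some work) factor the resulting system through $h=y^q+y$, the outcome is a condition of the shape $h^{q-1}=(\text{rational function of }u\text{ in }a,\sigma)$ that does not collapse to a single $I_j$-intersection statement and does not visibly pin down $(a,\sigma)$. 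The paper does not proceed this way at all. It exploits the observation you already made --- that every $rf+g$ must avoid $\Pi_1$ --- directly: writing $\Pi_1=[(1,0,0,1)_q]_{\Peq}$ and equating $rf+g$ to a generic orbit element yields, after eliminating the free parameters $r$ and $\gamma=\alpha^{q+1}$, precisely the requirement that the fractional $q$-projective map
\[
\varphi_{c,d}\colon x\;\longmapsto\;\frac{x^{q+1}+d}{x^q+x+c}
\]
permute $\PLL$. That is exactly what Lemma~\ref{lem_fracproj} classifies (and is the reason that lemma appears in the paper); combined with $\trace{c}=1$ it forces $c=d=1$ at once, after which the identification with $G_{q+1}$ or $G_{q+r}$ is a short explicit computation in $\LL(\xi)$. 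Your proposal never invokes Lemma~\ref{lem_fracproj}, and the Dillon--Dobbertin machinery you reach for is the engine of Proposition~\ref{prop_s}, not of this proposition.
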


\begin{proof}
It is clear by Proposition \ref{prop_s} that $f,g \in \Pi_3 \cup \Pi_0$.
Now, if $f,g \in \Pi_0$ then $rf+sg \in \Pi_0$ for all 
$(r,s) \in \LL \times \LL \setminus \{(0,0)\}$  is absurd.
Thus, whenever $rf+sg$ has a $\PLL$-solution it must have exactly
three $\PLL$ solutions again by Proposition \ref{prop_s}. Since, 
$(f,g) \GLEQ (rf+sg,g)$, we can assume that $f = (0,1,1,0)_q$ and $g \in  \Pi_0 \cup \Pi_3$.
Let $g = (a,b,c,d)_q$. We can assume by the left action of $\GL(2,\LL)$, $b = 0$, and 
by scaling, $a = 1$ (note that $a = 0$ is impossible since it implies $g \in \Pi_2$). 
Thus $g = (1,0,c,d)_q$. We have
\begin{align*}
 \DE{f}{\infty}(x,y) &= y^q + y = 0,\\
 \DE{g}{\infty}(x,y) &= (x^q + x) + c y^q = 0,
\end{align*}
which always have $(x,y) \in \f{2} \times \{0\}$ as solutions. Thus, 
$\trace{c} = 1$, so that $\f{2} \times \{1\}$ do not
give more solutions, in particular $c \ne 0$. Also
\begin{align*}
 \DE{f}{0}(x,y) &= x^q + x = 0,\\
 \DE{g}{0}(x,y) &= c x + d (y^q + y) = 0,
\end{align*}
implying $d \ne 0$.

Now we will introduce the main argument: If $F = (f,g)$ is APN then 
$rf+g \in \Pi_0 \cup \Pi_3$. Let us determine the conditions on $F$ when
$rf+g \in \Pi_1$. We will show that this is always the case unless $F$
is equivalent to a Gold map.

By Lemma \ref{lem_0123} (iii), $[(1,0,0,1)_q]_{\Peq} = \Pi_1$.
Let
\[
A_{\alpha,\beta,u,v} = \begin{pmatrix}
\alpha   & u\alpha \\
\beta    & v\beta
\end{pmatrix}
\]
where $\alpha,\beta \in \LL \setminus \f{2}$ and $u,v \in \LL$ with $u \ne v$
so that $\det (A_{\alpha,\beta,u,v}) = \alpha\beta(u+v) \ne 0$.

Let $h = (1,0,0,1)_q$ and consider the right action of $\PGL(2,\LL)$ 
(viewing $h$ as a univariate $q$-projective polynomial), 
\[
	h'(x) = \beta^{q+1}(x + v)^{q+1}h(\mu_{\alpha,\beta,u,v}(x)),
\]
where 
\[
\mu_{\alpha,\beta,u,v}(x) : x \mapsto \frac{\alpha (x + u)}{\beta (x + v)}.
\]
Equating the terms of $h'$ and $rf+g$, we get
\begin{align*}
\alpha^{q+1}         + \beta^{q+1}         &= 1,\\
\alpha^{q+1} u       + \beta^{q+1} v       &= r,\\
\alpha^{q+1} u^q     + \beta^{q+1} v^q     &= r+c,\\
\alpha^{q+1} u^{q+1} + \beta^{q+1} v^{q+1} &= d.
\end{align*}
Since $x \mapsto x^{q+1}$ is bijective and $r \in \LL$, we can rewrite these as
\begin{align*}
\gamma (u^q + u) + (\gamma+1) (v^q + v) &= c,\\
\gamma u^{q+1}   + (\gamma+1) v^{q+1}   &= d,
\end{align*}
where $\gamma = \alpha^{q+1} = 1 + \beta^{q+1}$. Since $\alpha,\beta \in \LL \setminus \f{2}$,
so is $\gamma$ and for all $\gamma \in \LL \setminus \f{2}$, we can find $\alpha$ and $\beta$
satisfying the above equalities. Equivalently,
\begin{align}
\label{eq1} \gamma (u^q + u + c) &= (\gamma+1) (v^q + v + c),\\
\label{eq2} \gamma (u^{q+1} + d) &= (\gamma+1) (v^{q+1} + d).
\end{align}
Note that $\trace{c} = 1$ and $\gamma \not\in \f{2}$, therefore neither side of
\eqref{eq1} vanishes. Thus we can divide 
\eqref{eq2} by \eqref{eq1} side by side to get
\begin{equation}
\label{eq3} \varphi_{c,d}(u) = \varphi_{c,d}(v),
\end{equation}
where
\[
\varphi_{c,d} : x \mapsto \frac{x^{q+1}+d}{x^q+x+c}. 
\]
Clearly \eqref{eq1} and \eqref{eq2} hold together if and only if 
        \eqref{eq1} and \eqref{eq3} hold together.

Now assume $\varphi_{c,d}$ is not bijective on $\LL$. Then there exist $u,v \in \LL$ such that 
$\varphi_{c,d}(u) = \varphi_{c,d}(v)$ with $u + v \not\in \f{2}$. It is clear that
\[
\varphi_{c,d}(x) + \varphi_{c,d}(x+1) =  \frac{x^{q+1}+d+(x+1)^{q+1}+d}{x^q+x+c} = \frac{x^q+x+1}{x^q+x+c} \ne 0,
\]
since $\trace{1}=1$. For such $u,v$, \eqref{eq1} becomes
\[
\frac{u^q + u + c}{v^q + v + c} = \frac{\gamma+1}{\gamma} = 1 + \frac{1}{\gamma} \not\in \f{2}.
\]
Thus, selecting $\alpha,\beta \in \LL \setminus \f{2}$ we can produce such $\gamma$. Hence, $\varphi_{c,d}$ 
must be bijective, and by Lemma \ref{lem_fracproj}, we must have $c = d = 1$ and $g = (1,0,1,1)_q$. 

Now let $\langle \xi \rangle =  \f{4}^\times$. Any $X \in \f{2^{2l}}$ can be written 
as $X = x+y\xi$ where $x,y \in \LL$. We have
\[
\xi^{2^k} = \left\{
\begin{array}{ll}
\xi + 1 & \textrm{if $k$ is odd,}\\
\xi     & \textrm{if $k$ is even}.
\end{array}
\right.
\]
Now if $k$ is odd, then
\begin{align*}
	(x+y\xi)^{q+1} &= x^{q+1}+ xy^q +y^{q+1}+\xi (x^qy + xy^q)\\
	               &\cong ((x^{q+1}+xy^q+y^{q+1}),x^qy+xy^q)\\
	               &\GLEQ ((0,1,1,0)_q,(1,0,1,1)_q),
\end{align*}
and if $k$ is even,
\begin{align*}
	(x+y\xi)^{q+r} &= x^{q+1}+ x^qy +y^{q+1}+\xi (x^qy + xy^q)\\
	               &\cong ((x^{q+1}+x^qy+y^{q+1}),x^qy+xy^q)\\
	               &\GLEQ ((x^{q+1}+xy^q+y^{q+1}),x^qy+xy^q)\\
	               &\GLEQ ((0,1,1,0)_q,(1,0,1,1)_q),
\end{align*}
using $(x,y) \mapsto (y,x)$ in the penultimate line,
proving our assertion that $(f,g) \GLEQ G_{q+1}$ when $k$ is odd,
and                        $(f,g) \GLEQ G_{q+r}$ when $k$ is even.
Note that when $l$ is odd and $\Gcd(k,l) = 1$, we have 
$\Gcd(k,2l)   = 1$ if $k$ is odd and $\Gcd(l-k,2l) = 1$ if $k$ is even.
By \cite{gold}, these maps are APN since $q+r = q(1+r/q) = 2^k(1+2^{l-k})$. 
\end{proof}

\subsection{Proof of Theorem \ref{thm_main}}

Now, Propositions \ref{prop_gcd}, \ref{prop_s}, \ref{prop_even} and \ref{prop_odd}
together with Lemma \ref{lem_sset} proves Theorem \ref{thm_main}.

\section{Acknowledgments}
The author would like to thank Claude Carlet for his comments.
He also thanks Lukas K\"olsch for many discussions that improved 
the treatment considerably. 

This work was supported by the {\sf GA\v{C}R Grant 18-19087S - 301-13/201843}.

\bibliographystyle{amsplain}
\bibliography{biproj_arxiv}

\end{document}